\definecolor{backcolour}{rgb}{0.95,0.95,0.95}
\lstdefinestyle{mystyle}{
    backgroundcolor=\color{backcolour},
    stringstyle=\color{codepurple},
    basicstyle=\normalfont\ttfamily\scriptsize,  
    frame=ltb,
    framerule=0pt,
    breaklines=true,                 
    captionpos=b,  
}
\newcommand*{\toccontents}{\@starttoc{toc}}
\newtheorem{theorem}{Theorem}[section]
\newtheorem{corollary}[theorem]{Corollary}
\newtheorem{proposition}[theorem]{Proposition}
\newtheorem{remark}[theorem]{Remark}
\theoremstyle{definition}
\newtheorem{example}[theorem]{Example}
\newtheorem{definition}[theorem]{Definition}
\algnewcommand\algorithmicinput{\textbf{Input:}}
\algnewcommand\algorithmicoutput{\textbf{Output:}}
\algnewcommand\Input{\item[\algorithmicinput]}%
\algnewcommand\Output{\item[\algorithmicoutput]}
\newcommand{\C}{\mathbb{C}}
\newcommand{\R}{\mathbb{R}}
\renewcommand\bar\overline
\newcommand{\rank}{{\mathrm{rank}}}
\DeclareMathOperator{\Conv}{Conv}
\DeclareMathOperator{\Reg}{Reg}
\DeclareMathOperator{\codim}{codim}
\renewcommand{\perp}{\rotatebox[origin=c]{90}{$\models$}}
\title{Degrees of the Wasserstein Distance to Small Toric Models}
\author{Greg DePaul}
\address{University of California, Davis}
\email{gdepaul@ucdavis.edu}
\author{Serkan Ho\c{s}ten}
\address{San Francisco State University, California}
\email{serkan@sfsu.edu}
\author{Nilava Metya}
\address{Rutgers University, New Jersey}
\email{nilava.metya@rutgers.edu}
\author{Ikenna Nometa}
\address{University of Hawai`i at M$\bar{\text{a}}$noa, Honolulu Hawai`i}
\email{inometa@hawaii.edu}
\newtheorem{conjecture}{Conjecture}
\newcommand{\M}{\mathcal{M}}
\keywords{Wasserstein distance, Algebraic Degree, Polar Degree, Rational Normal Scroll, Hirzebruch Varieties, Toric Model}
\subjclass{13P15,13P25,14J26,14M25,14Q20,62R01}
\begin{document}

\maketitle
\begin{abstract} The study of the closest point(s) on a statistical model from a given distribution in the probability simplex with respect to a fixed Wasserstein metric gives rise to a polyhedral norm distance optimization problem.   
There are two components to the complexity of
determining the Wasserstein distance from a data point to a model. One is the combinatorial complexity that
is governed by the combinatorics of the Lipschitz polytope of the finite metric to be used. Another is the algebraic complexity, which is governed by the polar degrees of the Zariski closure of the model. We find formulas for the 
polar degrees of rational normal scrolls and graphical models whose underlying graphs are star trees. Also, the polar degrees of the graphical models 
with four binary random variables where
the graphs are a path on four vertices and the four-cycle, as well as for small, no-three-way interaction models, were computed. 
We investigate the algebraic degree of computing the Wasserstein distance to a small subset of these models. It was observed that this algebraic degree is typically smaller than the corresponding polar degree.
\end{abstract}
\section{Introduction}
The probability simplex 
$$ \Delta_{n-1} \, := \, \left\{(p_1, \ldots, p_n) \, : \, \sum_{i=1}^n p_i = 1, \mbox{ and } p_i \geq 0, \, i=1, \ldots,n \right\}$$
consists of probability distributions of a discrete random variable with a state space of size $n$. We  take this state space to be $[n] := \{1, \ldots, n\}$. A {\it statistical model} $\M$ is a subset of $\Delta_{n-1}$ which represents distributions to which a hypothesized unknown distribution $\nu$ belongs. Typically, after collecting data $u=(u_1, \ldots, u_n)$ where $u_i$ is the number of times outcome $i$ is observed, one forms 
the empirical distribution ${\bar \mu} = \frac{1}{N} u$  where $ N = \sum_{i=1}^n u_i$ is the sample size. Note that ${\bar \mu} \in \Delta_{n-1}$. To estimate the unknown distribution $\nu$, a standard approach is to locate $\nu \in \M$, that is a ``closest" point to ${\bar \mu}$. For instance, $\nu$ can be taken to be the maximum likelihood estimator \cite[Chapter 7]{sullivant2018algstat} of ${\bar \mu}$. In this case,
$\nu$ is the point on $\M$ that minimizes the Kullback-Leibler divergence from ${\bar \mu}$ to $\M$. However, Kullback-Leibler divergence is not a metric, and the maximum likelihood estimator does not minimize a true distance function from ${\bar \mu}$ to $\M$.

For the above density estimation problem, one can use a distance minimization approach if the state space $[n]$ is also a metric space. 
A metric on $[n]$ is a collection of nonnegative real numbers $d_{ij}$ for $i,j \in [n]$ such that $d_{ii} = 0$ for all $i \in [n]$, $d_{ij} = d_{ji}$, and the triangle inequality $d_{ik} \leq d_{ij} + d_{jk}$
holds for all $i,j,k \in [n]$. Sometimes, the metric on $[n]$ is written as an $n\times n$ nonnegative symmetric matrix $d=(d_{ij})_{i,j\in [n]}$. Common examples include the discrete metric, the $L_1$ metric, the $L_0$ metric, and the Hamming distance metric.

For two probability distributions $\mu$ and $\nu$ in
$\Delta_{n-1}$, the optimal value $W_d(\mu,\nu)$ of the following linear program is the {\it Wasserstein distance} between $\mu$ and $\nu$ based on the metric $(d_{ij})$:
\begin{equation} \label{eq:Wasserstein program}
\mbox{maximize} \quad \sum_{i=1}^n (\mu_i - \nu_i)x_i \quad \mbox{subject to} \quad |x_i -
x_j| \leq d_{ij} \,\, \mbox{for all} \,\, 1 \leq i < j \leq n.
\end{equation}

The Wasserstein distance $W_d(\mu,\M)$ from $\mu \in \Delta_{n-1}$ to the model $\M$ is the minimum of $W_d(\mu,\nu)$ as
$\nu$ ranges over $\M$: 
\begin{equation} \label{eq: Wasserstein to model}
W_d(\mu, \M) := \min_{\nu\in \M} W_d(\mu, \nu).  
\end{equation}
This has been successfully used to construct a version of Generative Adversarial Networks \cite{WGAN17} where $W_d(\cdot, \M)$ is used as
the loss function. However, for large $n$, computing the exact Wasserstein distance $W_d(\mu, \M)$ is not feasible with the current state of knowledge. 

In this paper our starting point is \cite{Wasserstein2} and \cite{ccelik2021wasserstein} where an algebraic approach for the optimization problem in (\ref{eq: Wasserstein to model})
was developed to handle structured cases when $n$ is small.  We first recall this approach. 

The Wasserstein metric $W_d$ induced  by the finite metric $d$ on $[n]$ defines a 
norm on $\R^{n-1}$. The unit ball of this metric norm is the polytope
$$B = \mathrm{conv} \left\{ \frac{1}{d_{ij}}(e_i - e_j) \, : \, 1 \leq i < j \leq n \right\},$$
where $B$ lies in the hyperplane 
$x_1 + x_2 + \ldots + x_n = 0$.  For $\mu, \nu \in \Delta_{n-1}$, the Wasserstein distance $W_d(\mu,\nu)$ is equal to 
$$ \min \left\{ \lambda \geq 0 \, : \, \nu \in \mu + \lambda B \right\}. $$
The Wasserstein ball $B$ is the dual
of the {\it Lipshitz polytope}
$$ P_d = \left\{ x \in \R^n \, : \, |x_i - x_j| \leq d_{ij}, \, 1 \leq i < j \leq n \right\} \cap \left\{ x_1 + x_2 + \cdots + x_n = 0\right\}.$$
This is essentially the feasible region of the linear program (\ref{eq:Wasserstein program}). Now we can reformulate $W_d(\mu,\M)$, the Wasserstein distance
from $\mu \in \Delta_{n-1}$ to a statistical
model $\M \subset \Delta_{n-1}$:
\begin{equation} \label{eq: inflate the ball}   
W_d(\mu, \M) \, = \, \min \left\{ \lambda \geq 0 \, : \, (\mu + \lambda B) \cap \M \neq \emptyset \right \}.
\end{equation}
In algebraic statistics, the statistical
models considered are typically of 
the form $\M = \Delta_{n-1} \cap X$
where $X$ is a (complex) algebraic variety \cite{sullivant2018algstat}.When $X$ is a projective variety, in the intersection $\Delta_{n-1} \cap X$, we consider the affine variety that is the cone over $X$. 
In this paper, the algebraic variety $X$ is a {\it toric variety} and $\M$ is
a {\it toric model}, otherwise known as a {\it discrete exponential model}. The data
that defines $X$ is encoded as a $d \times n$ integer
matrix 
$$A = \begin{bmatrix} {\bf a}_1 &{\bf a}_2 & \cdots &{\bf a}_n
    \end{bmatrix},$$
that contains $(1,1,\ldots, 1)$ in its row space where ${\bf a}_j$ denotes the $j^\text{th}$ column of $A$, $j\in[n]$. Then $X$ is the Zariski closure
of the image of the monomial map
\begin{align*} (\C^*)^d &\longrightarrow (\C^*)^n \\  ( \theta_1, \ldots, \theta_d) &\longmapsto 
(p_1 = \theta^{\bf a_1}, \ldots, p_n = \theta^{\bf a_n}).
\end{align*}
Here $\theta^{{\bf a}_j}=\theta_1^{a_{1j}}\theta_2^{a_{2j}}\cdots \theta_d^{a_{dj}}$ for $j=1,2,\ldots, n$. The dimension of the projective toric variety $X$ and the discrete exponential model $\M$ both equal $\rank(A)-1$. 

\begin{example} \label{ex:twisted}
Consider tossing a biased coin three times and let $Y$ be the number of heads observed. If the probability of getting a head is $p$, so that tails come up with probability $q=1-p$,  then $\left(q^3, 3q^2p, 3qp^2, p^3\right) \in \Delta_3$ parametrizes all possible probability distributions of $Y$. The corresponding toric
variety $X$ is a curve that is isomorphic to the 
twisted cubic, and it is given by the equations
 \begin{align*}
    g_1 &= 3p_0p_2 - p_1^2 = 0\\
    g_2 &= 3p_1p_3-p_2^2 = 0\\
    g_3 &= 9p_0p_3-p_1p_2 = 0\\
    g_4 &= p_0+p_1+p_2+p_3 - 1 =0.
\end{align*} 
We use the discrete metric on $[4]=\{1,2,3,4\}$. This is the metric associated with $(d_{12}, d_{13}, d_{14}, d_{23}, d_{24}, d_{34}) = (1, 1,\cdots , 1)$. With 
this, the Wasserstein ball is a cuboctahedron with
$12$ vertices, $24$ edges, and $14$ facets. 
    \Cref{fig:wassex} is a picture of the simplex $\Delta_3$ together with 
    the twisted cubic (orange) curve and the Wasserstein ball (blue) centered at  
     $\mu=\left(\frac{1}{4},\frac{1}{5}, \frac{1}{6}, \frac{23}{60}\right)$. Slowly the radius $\lambda$ of the  Wasserstein ball around $\mu$ is decreased until it touches the curve. The snapshot shows $\lambda \approx 0.268$ when the ball just touches the curve at the red point $\nu\approx (0.02,0.16,0.44,0.38)$, and the point lies on an edge of the Wasserstein ball.
\end{example}

\begin{figure}
    \centering
    \includegraphics[width=2.5in]{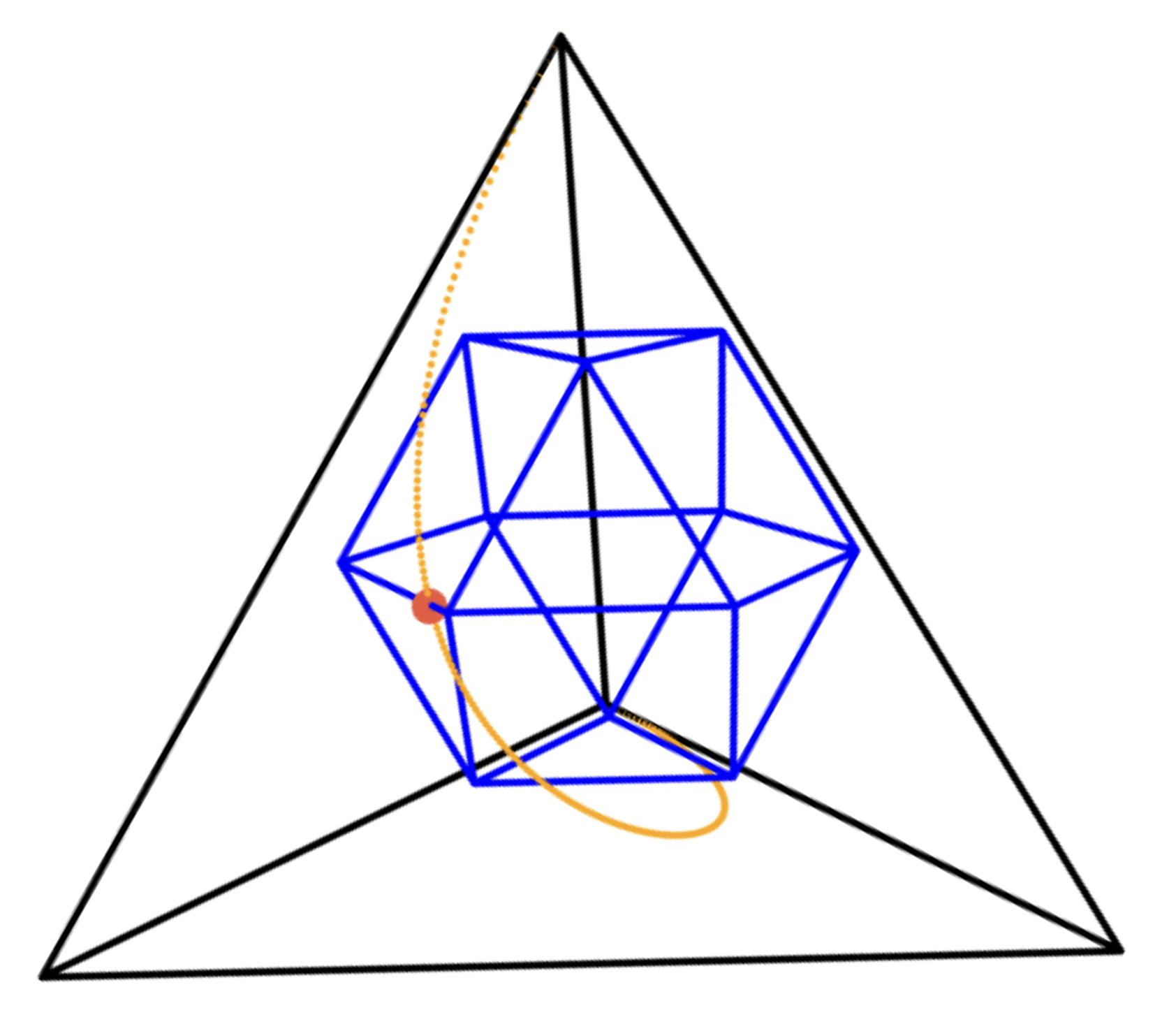}
    \caption{Wasserstein ball for the discrete metric touching the twisted cubic at an edge.}
    \label{fig:wassex}
\end{figure}

Understanding the geometry of the distance
minimization problem from a point to an algebraic variety is part of {\it metric algebraic geometry} \cite{BKSmetricAlgGeo, WeinsteinThesis}. This problem has been studied extensively when the distance metric is the usual Euclidean metric leading to the {\it Euclidean distance degree} of an algebraic variety; see \cite{draisma2016euclidean}, \cite{Helmer_Sturmfels_2018}. We wish to continue 
studying this problem in the case of the Wasserstein metric, first considered in \cite{ccelik2021wasserstein}.

Geometrically, it is not difficult to imagine what
needs to be done to find $W_d(\mu,\M)$ using
(\ref{eq: inflate the ball}). One scales
the Wasserstein unit ball $B$ centered at $\mu$ until it touches the model $\M$ for the very first time. As long as $\mu$ and $\M$ are generic with
respect to $B$, there will be a unique
intersection point, and it will be in the relative interior of one of the faces $F$ of $B$; see
\cite[Proposition 4]{ccelik2021wasserstein}. We can formulate the computation of this point as 
a linear optimization problem over an algebraic variety. We let $\mathcal{L}_F$ be the linear subspace generated by the vertices of the face $F$
of $B$ and let $\ell_F$ be any linear functional
that attains its maximum over $B$ at $F$. Then 
the optimal solution to 
\begin{equation} \label{eq: linear opt}
\mbox{minimize} \,\, \ell_F(\nu) \,\,\,
\mbox{subject to} \,\,  \nu \in (\mu + \mathcal{L}_F) \cap \M
\end{equation}
is the point we are looking for. From an algebraic
perspective, the crucial observation is that the optimal solution to (\ref{eq: linear opt}) is
one of the finitely many complex critical
points of $\ell_F$ over the variety $(\mu + \mathcal{L}_F) \cap X$. Therefore, the algebraic
complexity of finding the point on $\M$ that 
minimizes the Wasserstein distance from $\mu$ with
respect to the face $F$ is the number of these
complex critical points. 
\begin{definition} Let $F$ be a face of the 
Wasserstein ball $B$. Moreover, let $\mathcal{L}_F$ be the linear space spanned by
the vertices of $F$ and let $\ell_F$ be a generic linear functional that attains its maximum over $B$ in $F$. 
The {\it Wasserstein degree} $w(X,F)$ of the algebraic variety $X$ with respect to the face $F$ is 
the number of complex critical points of $\ell_F$
over $(\mu + \mathcal{L}_F) \cap X$
for a generic point
$\mu \in \Delta_{n-1}$. 
\end{definition}

We note that $w(X,F)$ indeed does not depend on the choice of the point $\mu$. This will follow from the algorithm we present in Section \ref{section3} to compute $w(X,F)$; see Proposition \ref{prop:Wassserstein-degree}.

The Wasserstein degrees $w(X,F)$ as $F$ ranges
over all faces of the Wasserstein ball $B$ 
are bounded above by the {\it polar degrees} of $X$. We will define polar degrees in the next section. \begin{theorem} \cite[Theorem 13]{ccelik2021wasserstein} \label{thm: polar-vs-Wasserstein} The Wasserstein degree $w(X,F)$ where\\ $\codim(F)=r+1$ is at most $\mu_i(X)$, the $i$th polar degree of $X$ where $i=\dim(X)-r$.     
\end{theorem}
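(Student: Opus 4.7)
The plan is to recognize the complex critical points of $\ell_F$ on $(\mu + \mathcal{L}_F) \cap X$ as the geometric points of an intersection inside the projective conormal variety of $X$, and to compare the resulting count with the $i$-th polar degree via an intersection-number computation and a semi-continuity argument.

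First, I will convert the critical-point condition into Lagrange-multiplier form. A smooth point $\nu \in (\mu + \mathcal{L}_F) \cap X$ is a complex critical point of $\ell_F$ if and only if $\ell_F|_{T_\nu X \cap \mathcal{L}_F} \equiv 0$, equivalently $\ell_F \in N^*_\nu X + \mathcal{L}_F^\perp$, where $N^*_\nu X$ is the conormal space of $X$ at $\nu$. Projectively, this says that there is a hyperplane containing the projective tangent space to $X$ at $\nu$ whose defining form lies in $\Lambda := \mathbb{P}(\langle \ell_F \rangle + \mathcal{L}_F^\perp) \subset (\mathbb{P}^{n-1})^*$. Writing $\bar V \subset \mathbb{P}^{n-1}$ for the projective subspace corresponding to the affine slice $\mu + \mathcal{L}_F$ (viewed inside the affine chart $\{\sum p_i = 1\}$), the set of complex critical points is identified with the first-factor projection of
$$\mathcal{C}(X) \cap (\bar V \times \Lambda) \;\subset\; \mathbb{P}^{n-1} \times (\mathbb{P}^{n-1})^*,$$
where $\mathcal{C}(X)$ is the projective conormal variety of $X$.

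Second, I will compute the intersection number and match it with a polar class. The conormal $\mathcal{C}(X)$ is irreducible of dimension $n-2$, and its class in the Chow ring of $\mathbb{P}^{n-1} \times (\mathbb{P}^{n-1})^*$ expands as
$$[\mathcal{C}(X)] \;=\; \sum_{j=0}^{d} \mu_j(X)\, h^{n-1-d+j}\, k^{d-j+1},$$
where $d := \dim X$, $h$ and $k$ are the hyperplane classes of the two factors, and $\mu_j(X)$ are the polar degrees. A direct codimension count using $\codim_B F = r+1$ gives $\codim \bar V = r$ and $\codim \Lambda = n-2-r$, so the pairing
$$[\mathcal{C}(X)] \cdot [\bar V \times \Lambda] \;=\; \sum_{j=0}^{d} \mu_j(X)\, h^{n-1+r-d+j}\, k^{n-1+d-r-j}$$
isolates the single monomial with $j = i = d - r$, yielding generic intersection number $\mu_i(X)$.

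Third, I will conclude by semi-continuity. For generic linear subspaces of the required codimensions the intersection $\mathcal{C}(X) \cap (\bar V \times \Lambda)$ is transverse and consists of exactly $\mu_i(X)$ reduced points. In the Wasserstein setup the subspaces $\bar V$ and $\Lambda$ may be special (in particular $\Lambda$ is fixed by $F$), but genericity of $\mu$ keeps $\bar V$ in general position within the family of parallel translates of $\mathcal{L}_F$, and upper semi-continuity of the number of reduced geometric points in a flat family of $0$-dimensional schemes then yields $w(X, F) \leq \mu_i(X)$. The main obstacle will be the second step: the polar-degree decomposition of $[\mathcal{C}(X)]$ must be set up with the indexing convention consistent with the paper, and the affine-to-projective reduction (passing to the chart $\{\sum p_i = 1\}$) requires verifying that no complex critical points are lost at infinity for generic $\mu$; the Lagrange and semi-continuity steps are standard once this bookkeeping is in place.
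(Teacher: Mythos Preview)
The paper does not prove this theorem; it is quoted verbatim from \cite[Theorem~13]{ccelik2021wasserstein} and used as a black box, so there is no argument in the present paper against which to compare your proposal.

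As a standalone sketch, your conormal-variety approach is the natural one and is correct in outline: critical points of $\ell_F$ on $(\mu+\mathcal L_F)\cap X$ inject into $N_X\cap(\bar V\times\Lambda)$, and the Chow-ring computation correctly isolates the coefficient $\mu_{\dim X-r}(X)$. Two points deserve more care. First, the pair $(\bar V,\Lambda)$ arising from a face $F$ is never in general position: since $\mathcal L_F^\perp\subset\langle\ell_F\rangle+\mathcal L_F^\perp$, the annihilator of $\bar V$ is always contained in $\Lambda$, so $(\bar V,\Lambda)$ sits on a closed incidence locus in the parameter space. Second, your semi-continuity step is phrased for a flat family, but flatness can fail exactly when the intersection acquires positive-dimensional components over the special fibre; the cleaner statement is that for \emph{any} $(\bar V,\Lambda)$ of the correct dimensions the sum of local intersection multiplicities at isolated points of $N_X\cap(\bar V\times\Lambda)$ is bounded above by the global intersection number $\mu_i(X)$ (refined intersection theory in the sense of Fulton), and for generic $\mu$ the critical points correspond to isolated points of this intersection. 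With those refinements the argument goes through, and it is presumably close in spirit to what appears in \cite{ccelik2021wasserstein}.
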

The next section will investigate various toric models and compute their polar degrees. We find formulas for the 
polar degrees of rational normal scrolls and graphical models whose underlying graphs are star trees. We also compute the polar degrees of the graphical models 
with four binary random variables where
the graphs are a path on four vertices and the four-cycle, as well as for small, no-three-way interaction models. 
Then, we investigate the Wasserstein degree $w(X,F)$ for a small subset of these models. We observe that, typically, this degree is smaller than the corresponding polar degree. 

\section{Polar Degrees}
In this section, we introduce the polar degrees of a projective variety. The notation used is adopted from \cite[Chapter 4]{BKSmetricAlgGeo}. 
There are formulas for the polar degrees of independence models, which are toric 
varieties that are products of projective spaces \cite[Theorem 14 and Corollary 15]{ccelik2021wasserstein}. 
We will provide formulas for the polar degrees of Hirzebruch surfaces, more generally, for rational normal scrolls.
We will also give formulas for the polar degrees of discrete graphical models whose underlying graph is a star tree. Moreover, we will report our computations of polar degrees for graphical models coming from graphs that are paths. These models are decomposable graphical models, a generalization of independence models. 
We will also treat a non-decomposable graphical model that comes from a binary four-cycle. Finally, we will report the polar degrees of some no-three-way interaction models. These are hierarchical log-linear models that are not decomposable. 

\subsection{Polar Degrees}

By Theorem \ref{thm: polar-vs-Wasserstein}, the Wasserstein degree of 
a model, $\M$ with respect to a face $F$ of the Wasserstein ball, is bounded above by the corresponding polar degree of the associated variety $X$. Polar degrees appear in similar applications for metric optimization problems. For example, Euclidean distance degrees, which measure the complexity of Euclidean distance optimization problem to $X$, are calculated using polar degrees  (see \cite{draisma2016euclidean},\cite{helmer2019polar},\cite{Helmer_Sturmfels_2018}). 

Polar degrees can be defined in terms of Schubert varieties and the Gauss map, multidegree of conormal varieties, or via non-transversal intersections with generic linear subspaces (see \cite[Chapter 4]{BKSmetricAlgGeo}). We will use the latter approach for a first definition. 

A projective linear subspace $V\subseteq \mathbb{P}^n$ is said to intersect a variety $X$ non-transversally at a point $p\in \Reg(X)$ if $p\in V$ and $\dim\left(V+T_{p}X\right)<n$. Here $V+T_{p}X$ denotes the projective span of $V$ and the tangent space of $X$ at $p$, while $\Reg(X)$ denotes the regular (nonsingular) locus of the variety $X$.

 \begin{definition}[Polar variety \cite{BKSmetricAlgGeo}]\label{def:polarVariety}
    The \textit{polar variety} of an irreducible variety $X\subset \mathbb P^n$ with respect to the linear subspace $V\subset \mathbb P^n$ is
	\begin{equation}
		P(X,V):=\overline{\{p \in Reg(X)\setminus V\,  : \,  p+V \text{ intersects } X \text{ at } p \text{ non-transversally}\}}.
		\label{eq::polarvariety}
	\end{equation}
\end{definition}	
    
    Let $i\in\{0,1,\ldots,\dim(X)\}$. If $V$ is generic with $\dim(V) = \codim (X)-2+i$, then the degree, $\deg(P(X, V))$,  of the polar variety $P(X, V)$ is independent of $V$. 
\begin{definition}[Polar degree]\label{def:polardeg}
	The integer $\mu_i(X)$, for $i=\dim (V) -\codim(X)+2$,  is called the $i^{th}$ 
        {\it polar degree} of $X$, where 
        \begin{equation}
            \mu_i(X):=\deg(P(X,V)).
            \label{eq:polardegree}
        \end{equation}
		
\end{definition}


It is easy to prove the following effective 
conditions for non-transversal intersections. 
\begin{proposition}\label{prop:Nontransversality}
A projective linear subspace $V\subseteq \mathbb{P}^n$ intersects a variety $X$ non-transversally at a point $p\in \Reg(X)$ if $p\in V$ and 
\begin{equation}\label{eq:NewnonTransver}
    \dim(V) - \codim(X) < \dim(T_pX \cap V). 
\end{equation}
\end{proposition}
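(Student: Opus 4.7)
The plan is to reduce the proposition to a direct application of the Grassmann dimension formula for the sum of two projective linear subspaces, using the hypothesis $p \in V \cap T_pX$ to guarantee that the intersection is non-empty.

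First, I would unpack the definition of non-transversal intersection stated just before Definition \ref{def:polarVariety}: the pair $(V,X)$ intersects non-transversally at a regular point $p$ precisely when $p \in V$ and
$$\dim\bigl(V + T_pX\bigr) < n.$$
Since we are also given $p \in V$ in the statement of the proposition, the only thing to verify is the dimension inequality for the projective span.

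Next, I would invoke the projective dimension formula
$$\dim(V + T_pX) \;=\; \dim(V) + \dim(T_pX) - \dim\bigl(V \cap T_pX\bigr),$$
which is valid because $V \cap T_pX$ is non-empty (it contains $p$, so its projective dimension is at least $0$). Since $p \in \Reg(X)$, the tangent space has the expected dimension $\dim(T_pX) = \dim(X)$. Substituting this in and rearranging, the inequality $\dim(V + T_pX) < n$ becomes
$$\dim(V) + \dim(X) - \dim\bigl(V \cap T_pX\bigr) \;<\; n,$$
which is equivalent to
$$\dim(V) - \bigl(n - \dim(X)\bigr) \;<\; \dim\bigl(V \cap T_pX\bigr),$$
i.e., $\dim(V) - \codim(X) < \dim(V \cap T_pX)$. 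This is precisely the hypothesis \eqref{eq:NewnonTransver}.

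There is essentially no obstacle here beyond bookkeeping; the only subtle point to flag is making sure the projective dimension formula is applied correctly (with the convention $\dim(\emptyset) = -1$ not being needed since $p$ lies in both subspaces). I would present the argument as a short chain of equivalences rather than implications, which also makes it clear that the converse holds, so the stated sufficient condition is in fact a characterization of non-transversality at a regular point lying in $V$.
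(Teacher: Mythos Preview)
Your argument is correct and is exactly the intended one: the paper omits the proof entirely, remarking only that it is easy, and your application of the Grassmann dimension formula together with $\dim(T_pX)=\dim(X)$ at a regular point is precisely the straightforward verification the authors had in mind. Your observation that the chain of equivalences actually yields a characterization (not merely a sufficient condition) is a nice bonus.
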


Notice that \Cref{prop:Nontransversality} can be used to assign a natural bound on what subspaces $V$ we need to consider when computing polar degrees of $X$.  When $\dim(V)$ is small enough, the non-transversality condition will hold for all points of $X$, hence $P(X,V) = X$. 
Therefore, we use those $V$ which satisfy  $\dim(V) = \codim(X) - 2 + i$. In other words, we consider 
$V$ where  $\codim(X) - 2  \leq  \dim(V) \leq n-2$.

   \begin{example}[Rational Normal Quartic Curve]\label{eg:polarDcomputation}
    Consider the rational map
    \begin{align*}
    \phi: \mathbb P^1 &\longrightarrow \mathbb P^4  \text{ given by }\\
    \phi([s:t]) & = \left[\binom{4}{0}s^4 :\binom{4}{1}s^3t : \binom{4}{2}s^2t^2 : \binom{4}{3}st^3 : \binom{4}{4}t^4 \right] = \left[s^4 :4s^3t : 6s^2t^2 : 4st^3 : t^4 \right].
    \end{align*}
    Let $X= \text{im}(\phi)$. Then $X$ has dimension $1$ and codimension $3$. The model $\M = X \cap \Delta_4$ is the set of probability distibutions of the random variable $Z$ that counts the number of heads 
    of a biased coin flipped four times in a row where the probability of observing head is $s$. To compute the polar degrees, we only need to consider linear subspaces $V\subset \mathbb P^4$ such that 
    $$\codim(X)-2\leq \dim(V)\leq n-2 \implies 1\leq \dim(V) \leq 2. $$
    
    \begin{itemize}
    \item If $V$ has dimension $1$, from the relation $\dim(V)=\codim(X)-2+i,$ we have $i=0$, and by  \cite[Theorem 4.14(b)]{BKSmetricAlgGeo} \textbf{$$\mu_0 (X)= \deg(X)=4.$$}
    
    \item If $\dim(V)=2$, then $i=1$. The tangent space $T_{p}X$ where $p\in \Reg(X)\setminus V$ is spanned by the Jacobian of the matrix
    $$
    \begin{bmatrix}
        4s^3 & 12s^2t & 12st^2 & 4t^3 & 0 \\
        0 & 4s^3 & 12s^2t & 12st^2 & 4t^3
    \end{bmatrix},
    $$ which can be rescaled to 
    $$
    \begin{bmatrix}
        s^3 & 3s^2t & 3st^2 & t^3 & 0 \\
        0 & s^3 & 3s^2t & 3st^2 & t^3
    \end{bmatrix}.
    $$
    $V$ is the span of three generic points. For example, we can choose 
    $$V = \text{span} \{[1:3:7:1:1],  [2:2:1:3:1], [1:1:1:3:2] \}.$$ For a non-transversal intersection of $p + V$ with $X$, we want 
    $$\dim(V)-\codim(X)=2-3=-1<\dim(T_{p}X+V).$$ 
    This is equivalent to requiring the determinant of the matrix
    $$
    \begin{bmatrix}
        s^3 & 3s^2t & 3st^2 & t^3 & 0 \\
        0 & s^3 & 3s^2t & 3st^2 & t^3\\
        1 & 3 & 7 & 1 & 1 \\
        2 & 2 & 1 & 3 & 1 \\
        1 & 1 & 1 & 3 & 2
    \end{bmatrix}
    $$
    to vanish. That is, if $20s^6 - 27s^5t - 33s^4t^2 + 127s^3t^3 - 96s^2t^4 + 18st^5 - 2t^6=0$. We conclude, therefore, that  $\mu_1(X)=6$.
\end{itemize}
 \end{example}

A second way of defining polar degrees of a projective variety $X$  is by using the multidegree of the conormal variety of $X$.
The dual projective space $(\mathbb{P}^n)^\vee$ parametrizes hyperplanes 
$\{x\in\mathbb{P}^n:\sum_{i=0}^nu_ix_i=0\}$ in $\mathbb{P}^n$ where $u$ is the corresponding point  in $(\mathbb{P}^n)^\vee$. 
The conormal variety $N_X$ of $X$ is defined as follows.

\begin{definition}[Conormal Variety]
Given a variety $X\subseteq \mathbb{P}^n$, the cornomal variety of $X$, denoted $N_X$ is
\begin{equation}\label{eq:conormalVar}
    N_X :=\overline{\{(x,u)\in \mathbb{P}^n\times(\mathbb{P}^n)^\vee \, : \, x\in\Reg(X) \text{ and } u \text{ is tangent to } X \text{ at } x\}}.
\end{equation}
\end{definition}

Recall that $u$ is tangent to $X$ at $x$ if and only if $H_u\supset T_xX$, where $H_u$ is the hyperplane in $\mathbb P^n$ associated with the point $u \in (\mathbb P^n)^\vee$.

The dimension of the conormal variety $N_X$ is $n-1$, and the dual variety $X^\vee$ of $X$ is the image of $N_X$ under the projection to 
$(\mathbb{P}^n)^\vee$ (see \cite[Chapter 4]{BKSmetricAlgGeo}). Now, for two generic linear
subspaces $L_1$ and $L_2$ where $\dim(L_1) = n+1-j$ and $\dim(L_2) = j$ we define
$$ \delta_j \, := \, |N_X \cap (L_1 \times L_2)|. $$
The multidegree of $N_X$ is the bivariate polynomial
$$ \sum_{j=1}^n \delta_j s^{n+1-j}t^j.$$ 

It is known that $\delta_j(X) = 0$ whenever 
$j < \codim(X^\vee)$ or $j > \dim(X) + 1$. Moreover, for $ j = \codim(X^\vee)$ and $j = \dim(X)+1$, $\delta_j(X)= \deg(X^\vee)$  and 
$\delta_j(X) = \deg(X)$, respectively.
\begin{theorem} \cite[Theorem 4.16]{BKSmetricAlgGeo}
The multidegree of $N_X$ of $X$ agrees with the polar degrees of $X$. More precisely, $\delta_j(X) = \mu_i(X)$ where 
$i = \dim(X) + 1 -j$.    
\end{theorem}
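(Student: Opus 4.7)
The plan is to realize both $\mu_i(X)$ and $\delta_j(X)$ (with $j=\dim(X)+1-i$) as counting points of the same finite intersection, by exhibiting the first-factor projection $\pi_1\colon N_X\to\mathbb{P}^n$ as a generically bijective map from a linear slice of the conormal variety onto a linear slice of a polar variety. The bridge between the two sides is projective duality applied to the linear subspace that cuts out the polar variety.

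First I would dualize. Starting with a generic linear subspace $V\subset\mathbb{P}^n$ with $\dim V=\codim(X)-2+i$, let $V^\perp\subset(\mathbb{P}^n)^\vee$ be the linear space of hyperplanes containing $V$; then $\dim V^\perp=n-1-\dim V=\dim(X)+1-i=j$. The key observation is that $u\in V^\perp$ is tangent to $X$ at a smooth point $p$ iff the hyperplane $H_u$ contains both $V$ and $T_pX$, which happens iff $\dim(V+T_pX)\le n-1$. By Proposition~\ref{prop:Nontransversality} this is exactly the non-transversality condition defining $P(X,V)$. Consequently, the projection $\pi_1$ restricted to $N_X\cap(\mathbb{P}^n\times V^\perp)$ has image $P(X,V)$, up to boundary terms arising from closing up and from the exclusion $p\notin V$.

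Next I would take $L_2=V^\perp$ and a generic linear $L_1\subset\mathbb{P}^n$ with $\dim L_1=n+1-j=\codim(X)+i$, and intersect with $N_X$ to count $\delta_j(X)=|N_X\cap(L_1\times L_2)|$. For generic $V$, $P(X,V)$ is pure of dimension $\dim(X)-i$, so $\dim P(X,V)+\dim L_1=n$ and a generic Bézout-type count gives $|P(X,V)\cap L_1|=\deg P(X,V)=\mu_i(X)$. It remains to show the projection
$$\pi_1\colon\, N_X\cap(L_1\times L_2)\;\longrightarrow\; P(X,V)\cap L_1$$
is a bijection. Over a generic $p\in P(X,V)\cap L_1$, the fiber is the set of hyperplanes containing $V+T_pX$; since $p$ lies where the non-transversality is as mild as possible, $\dim(V+T_pX)=n-1$ for generic $V$, so this fiber is a single point. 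This yields $\delta_j(X)=\mu_i(X)$.

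The main obstacle is the transversality and genericity bookkeeping at the end. One must certify that for generic $V$ and $L_1$, every point of $P(X,V)\cap L_1$ lies in $\Reg(X)\setminus V$, in the smooth locus of $P(X,V)$, and in the locus where $\dim(V+T_pX)$ attains its minimal value $n-1$, so that the fiber dimension is exactly zero and the projection is unramified and injective at each intersection point. Each of these is an open condition on $(V,L_1)$, and a standard Kleiman/Bertini-style argument on the parameter space closes the gap; however, because $N_X$ itself is typically singular, one must take care to run these transversality arguments inside the smooth locus of the incidence correspondence rather than on $N_X$ directly.
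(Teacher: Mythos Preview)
The paper does not supply its own proof of this statement; it merely cites it as \cite[Theorem 4.16]{BKSmetricAlgGeo}. So there is no in-paper argument to compare your proposal against.

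That said, your sketch is the standard route and is essentially sound. The identification $L_2=V^\perp$ is the right move: the map $V\mapsto V^\perp$ is an isomorphism of Grassmannians, so genericity of $V$ is equivalent to genericity of $L_2$, and your dimension count $\dim V^\perp=n-1-\dim V=\dim(X)+1-i=j$ is correct. The core incidence equivalence you state, that $u\in V^\perp$ is tangent to $X$ at $p$ if and only if $\dim(V+T_pX)<n$, is exactly what links $N_X\cap(\mathbb{P}^n\times V^\perp)$ to the polar locus $P(X,V)$ under $\pi_1$. The remaining step, that the restriction of $\pi_1$ to $N_X\cap(L_1\times V^\perp)$ is a bijection onto $P(X,V)\cap L_1$, is handled as you outline: for generic $V$ the drop in $\dim(V+T_pX)$ is exactly one on a dense open subset of $P(X,V)$, so the fibre over a generic point is a single hyperplane, and a generic $L_1$ of complementary dimension avoids the bad loci (singular points of $X$, points of $V$, the jump locus of $\dim(V+T_pX)$, and the singular locus of $P(X,V)$). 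Your closing caveat about running the transversality inside the smooth locus of the incidence correspondence is the right instinct; in practice one works on $\{(p,u):p\in\Reg(X),\,T_pX\subset H_u\}$ before closing up, where everything is smooth, and then checks that the closure does not acquire extra points after slicing by the generic $L_1\times L_2$.
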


Methods for computing multidegrees and polar degrees have been implemented in \texttt{ Macaulay2} \cite{M2}. In particular, in the \texttt{Resultants}  package \cite{ResultantsArticle, ResultantsSource}, one can use the \texttt{multidegree} function to compute and read off polar degrees. The following lines of code show how polar degrees of the variety in \Cref{eg:polarDcomputation} can be computed in \texttt{Macaulay2}.
\begin{lstlisting}
i1 : needsPackage "Resultants"
o1 = Resultants
o1 : Package
i2 : I = kernel map(QQ[s, t], QQ[x1,x2,x3,x4,x5], {s^4, 4*s^3*t, 6*s^2*t    ^2, 4*s*t^3, t^4});
o2 : Ideal of QQ[x1, x2, x3, x4, x5]
i3 : X = variety I
o3 = X
o3 : ProjectiveVariety
i4 : dim X, codim X
o4 = (1, 3)
o4 : Sequence
i5 : multidegree conormalVariety I
       4       3 2
o5 = 6T T  + 4T T
       0 1     0 1
o5 : ZZ[T ..T ]
         0   1
\end{lstlisting}
%
Also, the \texttt{Macaulay2} package \texttt{ToricInvariants} \cite{helmer2019polar, ToricInvariantsSource} can
be used to compute polar degrees of toric models using
the function \texttt{polarDegrees} (see also \cite{Helmer_Sturmfels_2018}). 
Some of the code we used for these computations can be found in \cite{GitHub_site}. 

\subsection{Rational Normal Scrolls}

A rational normal scroll is a toric variety associated with a $(d+1)\times n$ matrix $A$ of the following form:
\begin{equation}
A = \left[\begin{array}{ccccccccccccc}

1 & \cdots & 1 & 1 & \cdots & 1 & \cdots & 1 &
\cdots & 1 & 1 & \cdots & 1 \\
1 & \cdots & 1 & 0 & \cdots & 0 & \cdots & 0 & \cdots & 0 & 0 & \cdots & 0 \\
        0 & \cdots & 0 & 1 & \cdots & 1 & \cdots & 0 & \cdots & 0 & 0 & \cdots & 0 \\
        \vdots & \vdots  &  \vdots  & \vdots & \vdots &  & \vdots &  & \vdots &  \vdots & 0 & \vdots &  \vdots  \\
        0 & \cdots & 0 & 0 & \cdots & 0 & \cdots & 1 & \cdots & 1 & 0 & \cdots & 0 \\
        0 & 1\cdots & n_1 & 0 & 1\cdots &n_2  & \cdots & 0 & \cdots & 0 & 1 & \cdots & n_{d} \\
\end{array}\right].
\end{equation}

Here $n=n_1+n_2+\cdots+n_{d} + d$ where $n_1, \ldots, n_{d}$ are positive integers. 
Let $S=S(n_1,n_2,\ldots,n_{d})$ be the rational normal scroll in $\mathbb{P}^{n-1}$ corresponding to the sequence of positive integers
$n_1, n_2, \ldots, n_d$. Its defining ideal is given by the $2$-minors of $M$, where
\begin{equation}
M=\left[\begin{array}{c|c|c|c}
     M_{n_1} & M_{n_2} &\ldots & M_{n_{d}}  \\
\end{array}
\right]
\text{ and } M_{n_j} =
\left[\begin{array}{ccc}
     x_{j,0} & \cdots & x_{j,n_j-1}  \\
      x_{j,1} & \cdots & x_{j,n_j}  
\end{array}
\right].
\end{equation}
This description is due to Petrovi\'c \cite{petrovic2008universal}, where more details about this characterization of rational normal scrolls can be found. By definition $\dim(S) = d$ and its degree
is equal to $N = \sum_{j=1}^d n_j$.

In particular, when $d=2$, we recover Hirzebruch
surfaces $S(a,b)$ where we assume $a \leq b$.
The corresponding polytope is the quadrangle $P=\Conv\{(0,0), (a,0), (0,1), (b,1)\}$; see \Cref{fig:abpolytope}. The defining matrix $A$ 
is explicitly
\begin{equation}
 A = \left[
     \begin{array}{ccccccccc}
         1 & 1  & \cdots & 1& 1 & 1 & 1 &\cdots &1 \\
          0 & 1 & 2& \cdots &a & 0 & 1 & \cdots &b \\
          0 & 0 & 0&
         \cdots & 0& 1 & 1 & \cdots &1 \\
     \end{array}\right]
  \label{Eq:HS}
\end{equation}
which gives the monomial parametrization
$(s, t_1, t_2) \mapsto (s, st_1\ldots,st_1^a, st_2,st_1t_2, \ldots,st_1^bt_2)$.
\begin{figure}
    \centering
    \includegraphics[scale=0.7]{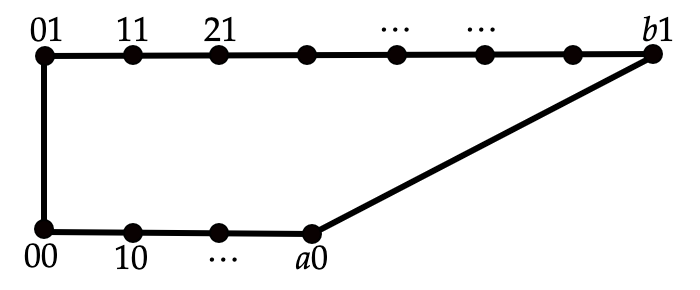}
    \caption{Polytope for the Hirzebruch surface $S(a,b)$.}
    \label{fig:abpolytope}
\end{figure}%

\begin{theorem}
 \label{thm:polarDegRationNormal}
      Let $S(n_1,\ldots, n_{d}) \subset \mathbb{P}^{n-1}$ be a rational normal scroll and let $N=~\sum_{j=1}^{d} n_j$. The 
      polar degrees of $S(n_1, \ldots, n_{d})$ are  
    \begin{equation}
        \delta_j =\begin{cases}
        N, & \text{ if }j=d-1,d+1 \\
        2(N-1), & \text{ if }j=d \\
        0 & \text{ otherwise.}
        \\
    \end{cases}
  \end{equation}
 \end{theorem}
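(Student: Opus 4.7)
My plan is to establish each of the three potentially non-vanishing entries $(\delta_{d-1},\delta_d,\delta_{d+1})$ separately; the vanishing of all other $\delta_j$ then follows from the general bounds $\codim(S^\vee)\leq j \leq \dim(S)+1=d+1$ stated earlier. For $\delta_{d+1}$, I appeal to the classical identity $\deg(S)=N$, read off either from the normalized volume of the scroll polytope or directly from the $2\times 2$-minor presentation via the matrix $M$.

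For $\delta_{d-1}=N$, I use the known description of the dual variety of a rational normal scroll (for $d\geq 2$): $S$ is dually defective with $\codim_{(\mathbb{P}^{n-1})^\vee}(S^\vee)=d-1$ and $\deg(S^\vee)=N$. The defect statement is verified via the Gauss map on the projective bundle realization of $S$, while the degree is most cleanly obtained either from an explicit description of $S^\vee$ as a secant or scroll-type variety of degree $N$ in the dual projective space, or from a direct Chern-class computation of the degree of the Gauss map. Combined with the vanishing $\delta_j(S)=0$ for $j<\codim(S^\vee)=d-1$, this gives $\delta_{d-1}(S)=\deg(S^\vee)=N$.

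For the middle value $\delta_d(S)=2(N-1)$, I exploit the fact that $S$ is smooth and isomorphic to the projective bundle $\pi\colon\mathbb{P}(\mathcal{E})\to\mathbb{P}^1$ with $\mathcal{E}=\bigoplus_{j=1}^d\mathcal{O}_{\mathbb{P}^1}(n_j)$, so that
\begin{equation*}
A^*(S)=\mathbb{Z}[\xi,f]\big/\bigl(f^2,\;\xi^d-Nf\xi^{d-1}\bigr),\qquad \int_S f\xi^{d-1}=1,
\end{equation*}
where $\xi=c_1(\mathcal{O}_{\mathbb{P}(\mathcal{E})}(1))$ is the embedding hyperplane class and $f$ is the pullback of a point of $\mathbb{P}^1$. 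The dualized relative Euler sequence together with the pullback of $T_{\mathbb{P}^1}$ produce, modulo $f^2$, the total Chern class $c(T_S)=(1+\xi)^d+f(1+\xi)^{d-1}(2+2\xi-N)$, from which $c_1(T_S)=d\xi+(2-N)f$. Piene's formula expresses the first polar class of a smooth projective variety as $[P_1(S)]=(d+1)\xi-c_1(T_S)$; evaluating $\deg[P_1(S)]=\int_S[P_1(S)]\cdot\xi^{d-1}$ and applying the single Chow relation $\xi^d=Nf\xi^{d-1}$ collapses the expression to $(d+1)N-dN-(2-N)=2N-2$.

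The main obstacle will be verifying the dual-variety facts cleanly for all $d\geq 2$ and carrying out the Chern-class bookkeeping for the middle polar degree without sign or convention errors. If the general computation proves delicate, a backup is to establish the formula first in small cases (for instance the Hirzebruch surfaces $S(a,b)$) via the \texttt{Macaulay2} packages \texttt{Resultants} and \texttt{ToricInvariants} mentioned earlier, and then induct on $N$ via a one-parameter deformation $S(n_1,\ldots,n_d)\leadsto S(n_1,\ldots,n_i+1,\ldots,n_d)$, tracking how each entry of the conormal multidegree transforms along the family.
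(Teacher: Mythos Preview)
Your proposal is correct but takes a genuinely different route from the paper. The paper's argument is geometric and extrinsic: it realizes $S=S(n_1,\ldots,n_d)$ as the intersection of the Segre variety $X_{1,N-1}=\mathbb{P}^1\times\mathbb{P}^{N-1}\subset\mathbb{P}^{2N-1}$ with a linear subspace $\Pi$ of dimension $n-1$. Because $X_{1,N-1}$ is self-dual, $S^\vee$ is the linear projection of $X_{1,N-1}$ from $\Pi^\vee$, which gives $\deg(S^\vee)=N$ and $\codim(S^\vee)=d-1$ at once, hence $\delta_{d-1}=N$. For $\delta_d$ the paper invokes biduality, $\mu_1(S)=\mu_1(S^\vee)$, and reads $\mu_1(S^\vee)=\delta_N(X_{1,N-1})=2(N-1)$ from the known polar degrees of Segre products. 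Your approach, by contrast, is intrinsic: you work directly on $S\cong\mathbb{P}(\mathcal{E})$, write down its Chow ring and $c_1(T_S)=d\xi+(2-N)f$, and obtain $\delta_d=\mu_1=2N-2$ from Piene's first polar class $[P_1]=(d+1)\xi-c_1(T_S)$; this computation is clean and correct. The paper's method is shorter and reduces everything to two facts about $\mathbb{P}^1\times\mathbb{P}^{N-1}$, but it imports the Segre polar degrees from elsewhere. Your method is more self-contained and can in fact be pushed further: since $S$ is smooth, Piene's full formula applied to the total Chern class you already wrote down yields $p_2=Nf\xi$ (so $\mu_2=\delta_{d-1}=N$) and $p_k$ of degree zero for $k\geq 3$, which would let you bypass entirely the classical dual-variety facts you flag as your main obstacle. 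Conversely, the paper's Segre-section argument is precisely the clean verification of $\deg(S^\vee)=N$ and $\codim(S^\vee)=d-1$ you were seeking, so the two approaches complement one another.
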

 \begin{proof}
 The rational normal scroll $S = S(n_1, \ldots, n_d)$ is equal to the intersection of the Segre embedding $X_{1,N-1}$ of $\mathbb{P}^1 \times \mathbb{P}^{N-1}$ in $\mathbb{P}^{2N-1}$
  with a linear subspace $\Pi$ of dimension $(n-1)$. Therefore $S^\vee$
  is obtained by projecting $X_{1,N-1}^\vee$ from $\Pi^\vee$. It is 
  known that $X_{1,N-1}^\vee = X_{1,N-1}$.
  Since the degree of $X_{1,N-1}$ is $N$
  we conclude that $\deg(S^\vee) = N$. 
  This also means that 
  $\codim(S^\vee) = (n-1)-N= d-1$. Therefore $\delta_{d-1} = N$.
  Because $\deg(S)$ is also $N$, for
  $\dim(S) + 1 = d+1$ we get $\delta_{d+1} = N$. Furthermore, according to \cite[Theorem 4.15]{BKSmetricAlgGeo} $\mu_1(S^\vee) = \mu_1(S) = \delta_d(S)$ which means that $\mu_1(S^\vee) = \delta_N(S^\vee)$. Again, since $S^\vee$ is obtained by projecting $X_{1,N-1}^\vee = X_{1,N-1}$ from $\Pi^\vee$, we need to compute $\delta_N(X_{1,N-1})$. We use the formula from \cite[Corollary 16]{ccelik2021wasserstein} to compute this polar degree to be $2(N-1)$.
 \end{proof}



\begin{corollary}
\label{cor:Hirzebruch}
    Let $S = S(a,b) \subset \mathbb P^{b+a+1}$ be a parameterized Hirzebruch toric surface determined by $a < b$. Then the 
    polar degrees of $S(a,b)$ are 
    
    \[\delta_j(S) =\begin{cases}
        a+b, & \text{ if } j=1,3 \\
        2(a+b-1), & \text{ if }j=2 \\
        0 & \text{ otherwise.}
        \\
    \end{cases}
    \] 
    \end{corollary}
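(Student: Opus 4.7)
The plan is to derive Corollary \ref{cor:Hirzebruch} as the immediate $d=2$ specialization of Theorem \ref{thm:polarDegRationNormal}. A Hirzebruch surface $S(a,b)$ is, by definition, the rational normal scroll $S(n_1,n_2)$ with $d=2$, $n_1=a$, and $n_2=b$, so the invariant $N=n_1+\cdots+n_d$ becomes $N=a+b$, and the ambient projective dimension $n-1=(n_1+n_2+d)-1$ becomes $a+b+1$, matching the statement of the corollary.

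First I would verify that the matrix $A$ displayed in (\ref{Eq:HS}) for $S(a,b)$ and the generic scroll matrix from Theorem \ref{thm:polarDegRationNormal} present the same toric variety. They differ only by a permutation of rows and a relabeling of the blocks, so they define the same projective embedding. This lets me invoke the theorem without any further translation.

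Next I would evaluate the three non-vanishing cases of Theorem \ref{thm:polarDegRationNormal} at $d=2$. The case $j=d-1$ becomes $j=1$ and yields $\delta_1(S)=N=a+b$; the case $j=d+1$ becomes $j=3$ and yields $\delta_3(S)=N=a+b$; the middle case $j=d$ becomes $j=2$ and yields $\delta_2(S)=2(N-1)=2(a+b-1)$. All remaining $\delta_j(S)$ vanish, producing exactly the piecewise formula claimed in the corollary.

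There is essentially no obstacle: once Theorem \ref{thm:polarDegRationNormal} is in hand, the corollary is a matter of substitution. The only bookkeeping point worth noting is that the hypothesis $a<b$ serves merely as an ordering convention, not as a genuine restriction, since all three polar degrees depend only on $N=a+b$ and on $d=2$, hence are symmetric in $a$ and $b$.
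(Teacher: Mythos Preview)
Your proposal is correct and matches the paper's approach: the paper states Corollary~\ref{cor:Hirzebruch} immediately after Theorem~\ref{thm:polarDegRationNormal} without separate proof, treating it precisely as the $d=2$ specialization you describe. Your bookkeeping on the matrices is accurate as well---the two presentations of $A$ differ only by a row permutation and the replacement of one block-indicator row by its complement, which preserves the row space and hence the toric variety.
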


\subsection{Small graphical models}
Let $G=(V, E)$ be a graph. $G$ is chordal if every induced cycle of $G$ is a $3-cycle$. A graph $G$ is decomposable if and only if $G$ is chordal \cite{dirac1961rigid}.
In this section, we compute the polar degrees of some small graphical models 
on discrete random variables. The models we consider are those based on graphs that are star graphs and paths
with at most four vertices. These are 
{\it decomposable} models. We note that 
independence models that were treated 
in \cite{ccelik2021wasserstein} are also decomposable graphical models corresponding to graphs with no edges. We are continuing the program of understanding the polar degrees and Wasserstein degrees of 
decomposable graphical models. Furthermore, we will also consider the 
smallest non-decomposable graphical model given by four binary random variables and the four-cycle graph. Finally, we will also compute the polar degrees of small
no-three-way interaction models. 

Let $G = (V,E)$ be a simple undirected graph on $n$ vertices
and let $Z_i$, $i=1, \ldots, n$ be discrete random variables with state 
space $[d_i]$. We let $\mathbf{d} = (d_1, \ldots, d_n)$. For each maximal clique
$C$ in $G$ with vertex set $V(C)$, 
we introduce parameters $\theta^C_I$
where $I \in \prod_{i\in V(C)} [d_i]$. We denote the set of all maximal
cliques in $G$ by $\mathcal{C}$.
Now we let $X_{G,\mathbf{d}}$ be the toric variety
defined by the monomial parametrization
$$ p_I = \prod_{C \in \mathcal{C}} \theta^C_{I(C)}$$
where $I \in \prod_{i}^n [d_i]$.
When $I = (i_1, \ldots, i_n)$ and 
$V(C)=\{j_1, \ldots, j_k\}$ 
we let $I(C) = (i_{j_1}, i_{j_2}, \ldots, i_{j_k})$. The 
toric variety $X_{G, \mathbf{d}}$
is a projective variety in $\mathbb{P}^{D-1}$ where $D = \prod_{i=1}^nd_i$. The graphical model 
$\M_{G,\mathbf{d}}$ is equal to 
$X_{G, \mathbf{d}} \cap \Delta_{D-1}$.
If $G$ consists of $n$ isolated vertices $\M_{G,\mathbf{d}}$ is called a {\it complete independence model}. 
 
\subsubsection{Polar degrees of star trees}
A star tree is a graph that is a tree 
with a single internal vertex as in \Cref{fig:GeneralStarGraph}. 
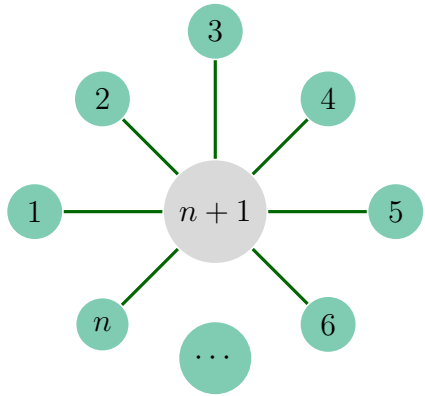
\begin{figure}[ht!]
    \centering
    \begin{tikzpicture}
    [scale=.3,auto=left,very thick,-]
\node [circle, fill=green!60!blue!50!white] (nA1) at (-8,0) {$1$};
\node [circle, fill=green!60!blue!50!white] (nA2) at (-5,5)  {$2$};
\node [circle, fill=green!60!blue!50!white] (nA3) at (0,8)  {$3$};
\node [circle, fill=green!60!blue!50!white] (nA4) at (5,5)  {$4$};
\node [circle, fill=green!60!blue!50!white] (nA5) at (8,0)  {$5$};
\node [circle, fill=green!60!blue!50!white] (nA6) at (5,-5)  {$6$};
\node [circle, fill=green!60!blue!50!white] (nA7) at (0,-6.5)  {$\cdots$};
\node [circle, fill=green!60!blue!50!white] (nAD) at (-5,-5)  {$n$};
\node [circle, fill=gray!30](nB) at (0,0)  {$n+1$};
\draw (nB) -- (nA1) [green!40!black] node [midway, right, black ] {};
\draw (nB) -- (nA2) [green!40!black] node [midway, right, black ] {};
\draw (nB) -- (nA3) [green!40!black] node [midway, right, black ] {};
\draw (nB) -- (nA4) [green!40!black] node [midway, right, black ] {};
\draw (nB) -- (nA5) [green!40!black] node [midway, right, black ] {};
\draw (nB) -- (nA6) [green!40!black] node [midway, right, black ] {};
\draw (nB) -- (nAD) [green!40!black] node [midway, right, black ] {};
\end{tikzpicture}
    \caption{A star tree with $n$ leaves.}
    \label{fig:GeneralStarGraph}
\end{figure}
We label the leaves of the tree by $1, \ldots, n$ and let the label of the internal vertex be $n+1$. The resulting 
model $\M_{G, \mathbf{d}}$ is a {\it conditional independence model} corresponding to the conditional independence statement
$Z_1 \, \perp \, Z_2  \, \perp \cdots \, \perp \, Z_n \mid Z_{n+1}$
for all $1 \leq i < j \leq n$. 
The equations defining $X_{G, \mathbf{d}}$ in this case are easy to 
describe:

\begin{proposition} \label{prop:tree-equations}
Let $G$ be a star tree with $n$ leaves
and let $\mathbf{d} = (d_1, \ldots, d_n, K)$. For each $1 \leq k \leq K$
let $I_k \subset \C[p_{i_1 \cdots i_n k}]$ be the ideal defining 
the complete independence model on $n$
random variables $Z_1, \ldots, Z_n$ with state spaces
$[d_i]$, $i=1, \ldots, n$.
Then 
$$I(X_{G,\mathbf{d}}) = I_1 + I_2 + \cdots + I_K.$$
    
\end{proposition}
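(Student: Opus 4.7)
The plan is to prove the equality of ideals by checking both containments, exploiting the fact that both sides are toric (binomial) ideals. For the forward containment $I_1 + \cdots + I_K \subseteq I(X_{G,\mathbf{d}})$, I would simply inspect the monomial parametrization: since $G$ is a star tree, the maximal cliques are the edges $\{j, n+1\}$ for $j=1,\ldots,n$, and the parametrization reads
$$ p_{i_1\cdots i_n k} \;=\; \prod_{j=1}^n \theta^{\{j,n+1\}}_{i_j, k}. $$
Fixing $k$ and regarding the scalars $\theta^{\{j,n+1\}}_{\cdot,k}$ for $j=1,\ldots,n$ as free parameters, the restriction of this parametrization to the slice $\{p_{i_1\cdots i_n k} : i_j \in [d_j]\}$ is literally the parametrization of the complete independence model on $n$ random variables with state spaces $[d_1],\ldots,[d_n]$. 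Hence every generator of $I_k$ vanishes on $X_{G,\mathbf{d}}$, and the containment is automatic.

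For the reverse containment, since $I(X_{G,\mathbf{d}})$ is a toric ideal, it suffices to show that every binomial $p^u - p^v \in I(X_{G,\mathbf{d}})$ lies in $I_1 + \cdots + I_K$. I would split the exponents according to the last index, writing $u = \sum_k u^{(k)}$ and $v = \sum_k v^{(k)}$ where $u^{(k)}, v^{(k)}$ are supported on variables whose last coordinate is $k$. The equality $p^u = p^v$ as monomials in the $\theta$'s forces, upon extracting the exponent of $\theta^{\{j,n+1\}}_{a,k}$ from both sides,
$$ \sum_{I : i_j = a} u^{(k)}_I \;=\; \sum_{I : i_j = a} v^{(k)}_I \qquad \text{for every } j, a, k. $$
Crucially, each parameter $\theta^{\{j,n+1\}}_{a,k}$ carries the index $k$, so the balance conditions decouple across different values of $k$. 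For each fixed $k$ the resulting condition is precisely the toric balance characterizing the complete independence ideal in the slice variables, so $p^{u^{(k)}} - p^{v^{(k)}} \in I_k$.

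Finally, I would reassemble the global binomial from its slice pieces by the standard telescoping identity
$$ p^u - p^v \;=\; \prod_{k=1}^K p^{u^{(k)}} \;-\; \prod_{k=1}^K p^{v^{(k)}} \;=\; \sum_{k=1}^K \Bigl(\prod_{\ell < k} p^{v^{(\ell)}}\Bigr)\bigl(p^{u^{(k)}} - p^{v^{(k)}}\bigr)\Bigl(\prod_{\ell > k} p^{u^{(\ell)}}\Bigr), $$
displaying $p^u - p^v$ as an element of $I_1 + \cdots + I_K$. The main obstacle is the decoupling step in the middle paragraph: one must justify that because $\theta^{\{j,n+1\}}_{\cdot,k}$ and $\theta^{\{j,n+1\}}_{\cdot,k'}$ are disjoint sets of parameters whenever $k \neq k'$, the $\theta$-degree balance can be tested slice-by-slice. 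This is a feature of star trees specifically, where the single internal vertex $n+1$ appears in every maximal clique, so the conditioning index $k$ labels every factor of each monomial and no cross-slice cancellation can occur; everything else is formal.
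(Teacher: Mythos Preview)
Your proof is correct. The forward containment is immediate from the parametrization, and for the reverse containment your key observation---that every parameter $\theta^{\{j,n+1\}}_{a,k}$ carries the internal index $k$, so the $\theta$-degree balance for a binomial in the toric ideal decouples across the slices---is exactly right and cleanly yields $p^{u^{(k)}} - p^{v^{(k)}} \in I_k$. The telescoping reassembly is standard.

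As for comparison: the paper does not actually supply a proof of this proposition. It is introduced with the remark that the defining equations ``are easy to describe'' and then stated without argument, followed only by the observation that the varieties defined by the $I_k$ are isomorphic to one another. So your write-up fills in what the paper leaves as an exercise; there is nothing to compare against beyond noting that the authors evidently regarded the statement as routine, and your argument confirms that it is.
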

We note that the conditional independence models defined by $I_k$ for each $k$ are isomorphic varieties. 

\begin{theorem} \label{thm:star-tree}
Let $G$ be a star tree with $n$ 
leaves and let $\mathbf{d} = (d_1, \ldots, d_n, K)$. Let $Y_{n, \mathbf{d'}}$ be the complete independence model on $n$ random variables with state spaces 
given by $\mathbf{d'} = (d_1, \ldots, d_n)$. If $h(s,t)$ is the multidegree
of $N_{Y_{n, \mathbf{d'}}}$ then
the multidegree of $N_{X_{G, \mathbf{d}}}$
is $h(s,t)^K$.
\end{theorem}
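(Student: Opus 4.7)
The plan is to exploit the direct-product decomposition of $X_{G,\mathbf{d}}$ given by Proposition~\ref{prop:tree-equations} and propagate it to the conormal variety. Write $D' = d_1d_2\cdots d_n$ and $D = KD'$. The proposition says $I(X_{G,\mathbf{d}}) = I_1+\cdots+I_K$, where the ideal $I_k$ involves only the disjoint coordinate block $\{p_{i_1\cdots i_n k} : i_j \in [d_j]\}$ and cuts out in that block a copy $Y_k$ of $Y_{n,\mathbf{d}'}$. Equivalently, the affine cone factors as a Cartesian product
\[
C(X_{G,\mathbf{d}}) \;=\; C(Y_1)\times\cdots\times C(Y_K) \;\subset\; \mathbb{C}^D \;=\; \bigoplus_{k=1}^K\mathbb{C}^{D'},
\]
so projectively, $X_{G,\mathbf{d}}$ is the iterated join $J(Y_1,\ldots,Y_K)$ in $\mathbb{P}^{D-1}$.

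At any point $x=(x^{(1)},\ldots,x^{(K)})$ that is smooth on each factor, the tangent space splits as $T_xC(X_{G,\mathbf{d}}) = \bigoplus_k T_{x^{(k)}}C(Y_k)$; hence the affine conormal space splits orthogonally as $\bigoplus_k N_{C(Y_k),x^{(k)}}$, with the $k$-th summand lying in the $k$-th dual coordinate block. Taking Zariski closures, the affine cone $\widehat{N}_{X_{G,\mathbf{d}}}$ of the conormal variety decomposes as
\[
\widehat{N}_{X_{G,\mathbf{d}}} \;=\; \widehat{N}_{Y_1}\times\cdots\times\widehat{N}_{Y_K}
\]
as bihomogeneous subvarieties of $\mathbb{C}^D\times\mathbb{C}^D$, where the $x$-coordinates carry $s$-degree $1$ and the $u$-coordinates carry $t$-degree $1$---precisely the bigrading underlying the multidegree polynomial $\sum_j\delta_j s^{D-j}t^j$.

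The remaining step is to deduce the product formula $\phi_{X_{G,\mathbf{d}}}(s,t)=\prod_k\phi_{Y_k}(s,t)=h(s,t)^K$ from this Cartesian decomposition. This rests on the multiplicativity of the multidegree polynomial of a bihomogeneous variety under Cartesian products in disjoint variables: the bihomogeneous coordinate ring of the product is the tensor product of the factor rings, so its multigraded Hilbert series is the product of the factor Hilbert series, and the multidegree polynomial is the standard numerical invariant extracted from this bigraded series. Alternatively, one can argue geometrically via $\delta_j(X)=|N_X\cap(L_1\times L_2)|$ by choosing $L_1,L_2$ compatible with the block decomposition through a Bertini-type perturbation and writing the intersection count as a convolution of the individual block bidegrees.

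I expect the main obstacle to be this last step, translating the Cartesian-product structure of the conormal varieties into multiplicativity of the multidegree polynomial. The tangent-space computation and the resulting product decomposition of $\widehat{N}_{X_{G,\mathbf{d}}}$ are routine linear algebra that is forced by the direct-product structure of the affine cones; however, passing rigorously to multidegrees requires either identifying $\phi_X$ precisely with the correct multigraded invariant of the bigraded Hilbert series, or, in the Bertini-style geometric approach, carefully tracking how generic flats in $\mathbb{P}^{D-1}$ distribute their codimension across the $K$ coordinate blocks so as to yield the desired convolution.
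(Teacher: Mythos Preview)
Your approach is essentially the same as the paper's. The paper packages the key step as a separate lemma (Proposition~\ref{prop:product-lemma}): if $Z\subset\mathbb{P}^{n+m+1}$ is cut out by $I(X)+I(Y)$ in disjoint variable blocks, then the multidegree of $N_Z$ is the product of the multidegrees of $N_X$ and $N_Y$. Its proof is exactly your ``Bertini-style geometric'' alternative: after noting that $N_Z$ decomposes into pairs $((x,y),(u,v))$ with $(x,u)\in N_X$ and $(y,v)\in N_Y$, one computes $\delta_k(Z)=|N_Z\cap(L_{n+m+2-k}\times L_k')|$ by choosing the generic flats to be block-diagonal, $L_{n+m+2-k}=\tilde L_{n+1-i}\times\tilde L_{m+1-j}$ and $L_k'=\tilde L_i'\times\tilde L_j'$ with $i+j=k$, which immediately gives the convolution $\delta_k(Z)=\sum_{i+j=k}\delta_i(X)\delta_j(Y)$. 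So the ``main obstacle'' you flagged is handled by the second of your two options, and the distribution of codimension across blocks is dealt with simply by taking one split $(i,j)$ at a time and summing.
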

\begin{proof}
    The proof follows from a slightly more general statement as in Proposition \ref{prop:product-lemma} below.
    \end{proof}

\begin{example} \label{eg:starTreeOnTwoLeaves}
 Let $G$ be the star tree with two leaves 
 where the two random variables corresponding to them are binary. We consider the cases where the random
 variable for the internal node is binary and ternary. Here, $Y_{2,(2,2)}$
 is the Segre embedding of $\mathbb{P}^1 \times \mathbb{P}^1$. The multidegree of the conormal variety of $Y_{2,(2,2)}$
 is 
 $$ 2s^3t + 2s^2t^2 + 2st^3.$$
 The multidegree of the conormal variety
 of $X_{G,(2,2,2)}$ is 
 $$ 4s^6t^2 + 8s^5t^3 + 12s^4t^4 + 8 s^3t^5 + 4 s^2t^6,$$ and that of  $X_{G,(2,3,2)}$ is
 $$8s^9t^3 + 24s^8t^4 + 48s^7t^5 + 56 s^6t^6 + 48 s^5t^7 + 24 s^4t^8 + 8 s^3t^9. $$
 We observe that the last two polynomials are the second and third powers of the first polynomial, as predicted by the above theorem. 
\end{example}

\begin{example}
 Let $G$ be the star tree with three leaves
 where all four random variables are binary. The multidegree of the conormal variety of the complete independence model on three binary variables is
$$ 4 s^7t + 12s^6t^2 + 12 s^5t^3 + 6s^4t^4.$$ 
And the multidegree of the conormal variety of the graphical model $X_{G,(2,2,2,2)}$ 
is 
$$16s^{14}t^2 + 96s^{13}t^3 + 240s^{12}t^4 + 336 s^{11}t^5 + 288 s^{10}t^6 + 144s^9t^7 + 36s^8t^8.$$
\end{example}

\begin{proposition} \label{prop:product-lemma} Let $X \subset \mathbb{P}^n$ and
$Y \subset \mathbb{P}^m$ be two irreducible projective varieties with
defining ideals $I \subset \C[x_0, \ldots,x_n]$ and $J \subset \C[y_0, \ldots, y_m]$, respectively. Let $Z \subset \mathbb{P}^{n+m+1}$ be the irreducible variety defined by $I + J$ 
in $\C[x_0, \ldots, x_n, y_0, \ldots,y_m]$.
Then, the multidegree of $N_Z$ is the product of the multidegrees of $N_X$ and $N_Y$. 
\end{proposition}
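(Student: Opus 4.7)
The plan is to exploit the product structure of the affine cone $C(Z) = C(X) \times C(Y)$ to factor the defining ideal of the conormal variety, and then deduce the multiplicativity of multidegrees from a direct Hilbert-series computation in the standard bigraded polynomial ring.

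First, since $I$ and $J$ involve disjoint sets of variables, the affine cone satisfies $C(Z) = C(X) \times C(Y) \subset \C^{n+1} \times \C^{m+1}$; in particular $Z$ is irreducible. At any smooth point $(x, y) \in C(Z)$ — equivalently $x \in \Reg(C(X))$ and $y \in \Reg(C(Y))$ — the tangent space decomposes as $T_x C(X) \oplus T_y C(Y)$, and hence the conormal (annihilator) fiber decomposes as $N_x C(X) \oplus N_y C(Y)$. Taking Zariski closures yields the affine-conormal identity $N_{C(Z)} = N_{C(X)} \times N_{C(Y)}$ inside $\C^{n+m+2} \times (\C^{n+m+2})^*$. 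Equivalently, in $R := \C[x, y, u, v] = \C[x, u] \otimes_{\C} \C[y, v]$, the defining ideal of $N_{C(Z)}$ is the sum $I_{N_{C(X)}} + I_{N_{C(Y)}}$ of two ideals supported on disjoint variable sets.

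I would then endow $R$ with the standard bigrading in which all primal variables $x_i, y_j$ have weight $(1,0)$ and all dual variables $u_i, v_j$ have weight $(0,1)$; this is precisely the bigrading whose Hilbert function computes the multidegree of $N_Z$ in $\mathbb{P}^{n+m+1} \times (\mathbb{P}^{n+m+1})^\vee$. Because the two summands of the ideal use disjoint variables, the quotient factors as a tensor product
\[
R / \bigl(I_{N_{C(X)}} + I_{N_{C(Y)}}\bigr) \;\cong\; \C[x, u] / I_{N_{C(X)}} \,\otimes_{\C}\, \C[y, v] / I_{N_{C(Y)}},
\]
and its bigraded Hilbert series is therefore the product of the two individual bigraded Hilbert series. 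Writing each Hilbert series as a rational function over the canonical denominator $(1-s)^{n+m+2}(1-t)^{n+m+2}$ and comparing numerators shows that the $K$-polynomial of the quotient is the product of the two factor $K$-polynomials. Passing from $K$-polynomials to multidegree polynomials via the standard Miller–Sturmfels procedure — substitute $s \mapsto 1-s$, $t \mapsto 1-t$ and extract the initial homogeneous form — preserves this factorization, since the initial forms of both factors are nonzero (they equal $h_X$ and $h_Y$, which have nonnegative coefficients). This yields $h_Z(s, t) = h_X(s, t) \cdot h_Y(s, t)$, as claimed.

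The key step, and the one where the disjointness hypothesis on variables is used decisively, is the factorization of the quotient ring as a tensor product, which is the algebraic counterpart of the affine-conormal identity $N_{C(Z)} = N_{C(X)} \times N_{C(Y)}$. Once that is in place, the remaining passage from Hilbert series to $K$-polynomials to multidegree polynomials is routine multigraded commutative algebra and should present no serious obstacle; indeed, it is precisely the mechanism by which multidegrees behave multiplicatively for varieties whose defining ideals live in disjoint variable blocks.
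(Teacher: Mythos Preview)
Your proof is correct, and it takes a genuinely different route from the paper's argument. Both proofs rest on the same key geometric observation---that the affine conormal variety factors as $N_{C(Z)} = N_{C(X)} \times N_{C(Y)}$ because tangent and normal spaces split along the product $C(Z)=C(X)\times C(Y)$---but they diverge from there. The paper proceeds geometrically: it computes $\delta_k(Z) = |N_Z \cap (L_{n+m+2-k}\times L_k')|$ by degenerating the generic linear spaces to products $\tilde L_{n+1-i}\times \tilde L_{m+1-j}$ (and similarly on the dual side), then counts intersections to obtain the convolution formula $\delta_k(Z)=\sum_{i+j=k}\delta_i(X)\delta_j(Y)$ directly. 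You instead pass through bigraded commutative algebra: the tensor factorization of the coordinate ring gives multiplicativity of Hilbert series, hence of $K$-polynomials, hence of their initial forms (the multidegrees). Your argument is more self-contained in that it sidesteps the need to justify why the special product linear spaces still compute the generic intersection number, at the price of importing the Miller--Sturmfels machinery relating Hilbert series to multidegrees. The paper's argument is shorter and stays within the intersection-theoretic definition used throughout, but its degeneration step is stated tersely and would benefit from exactly the kind of justification your route avoids.
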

\begin{proof}
We first observe that 
$N_Z$ is the Zariski closure of points
$\left( (x,y), (u,v) \right)$
such that $x \in \Reg(X)$, $y \in \Reg(Y)$, and $u \in (\mathbb{P}^n)^\vee$ that is tangent to $X$ at $x$ and
$v \in (\mathbb{P}^m)^\vee$ that is tangent to $Y$ at $y$. 
 The polar degree $\delta_k$ for $Z$
 is equal to $|N_Z \cap (L_{n+m+2-k} \times L_k')|$ where $L_{n+m+2-k} \subset \mathbb{P}^{n+m+1}$
 and $L_k'\subset (\mathbb{P}^\vee)^{n+m+1}$
 are generic subspaces of dimension $n+m+2-k$ and $k$, respectively. We can choose
 $L_{n+m+2-k} = \tilde{L}_{n+1-i} \times \tilde{L}_{m+1-j}$ with $i+j=k$ and $\tilde{L}_{n+1-i}$ and $\tilde{L}_{m+1-j}$ to be generic in $\mathbb{P}^n$ and $\mathbb{P}^m$, respectively. 
 Similarly, $L_k' = \tilde{L}_i' \times \tilde{L}_j'$. Now
 $$\delta_k(Z) = |N_Z \cap (L_{n+m+2-k} \times L_k')| = \sum_{i+j=k} |N_X \cap (\tilde{L}_{n+1-i} \times \tilde{L}_i')| \cdot |N_Y \cap (\tilde{L}_{m+1-j} \times \tilde{L}_j')|, $$
 and the last sum is equal to 
 $\sum_{i+j=k} \delta_i(X)\delta_j(Y)$.
 From this, the result follows. 
\end{proof}

\begin{remark}
Since the multidegree of $N_{Y_{n, \mathbf{d'}}}$, in other words, polar
degrees of the complete independence models $Y_{n, \mathbf{d'}}$ have been explicitly computed (see \cite{ccelik2021wasserstein} and \cite{sodomaco2020distance}), we can also give explicit formulas for the graphical models on any
star tree by Theorem \ref{thm:star-tree}.
\end{remark}

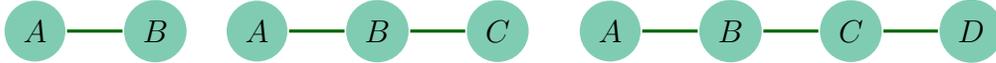
\begin{figure}
     \centering
   \begin{minipage}{2.5cm}
         \centering
         \begin{tikzpicture}
  [scale=.4,auto=left,very thick,-]
\node [circle, fill=green!60!blue!50!white] (nA) at (-4,0) {$A$};
\node [circle, fill=green!60!blue!50!white](nB) at (0,0)  {$B$};
\draw (nA) -- (nB) [green!40!black]  node [midway, left, black ] {};
\end{tikzpicture}
     \end{minipage}%
    \begin{minipage}{5cm}
         \centering
         \begin{tikzpicture}
  [scale=.4,auto=left,very thick,-]
\node [circle, fill=green!60!blue!50!white] (nA) at (-4,0) {$A$};
\node [circle, fill=green!60!blue!50!white] (nC) at (4,0)  {$C$};
\node [circle, fill=green!60!blue!50!white](nB) at (0,0)  {$B$};
\draw (nA) -- (nB) [green!40!black]  node [midway, left, black  ] {};
\draw (nB) -- (nC) [green!40!black] node [midway, right, black ] {};
\end{tikzpicture}
     \end{minipage}%
     \begin{minipage}{6cm}
         \centering
         \begin{tikzpicture}
  [scale=.4,auto=left,very thick,-]
\node [circle, fill=green!60!blue!50!white] (nA) at (-4,0) {$A$};
\node [circle, fill=green!60!blue!50!white] (nC) at (4,0)  {$C$};
\node [circle, fill=green!60!blue!50!white] (nD) at (8,0)  {$D$};
\node [circle, fill=green!60!blue!50!white](nB) at (0,0)  {$B$};
\draw (nB) -- (nA) [green!40!black]  node [midway, left, black  ] {};
\draw (nB) -- (nC) [green!40!black] node [midway, right, black ] {};
\draw (nC) -- (nD) [green!40!black] node [midway, right, black ] {};
\end{tikzpicture}
     \end{minipage}
     \caption{Small path graphs on $2,3,$ and $4$ vertices.}
     \label{fig:BinPathGraphs}
\end{figure}

\subsubsection{Polar degrees of the binary $4$-path}
Another example we looked at is the graphical model
where $G$ is a path with $n$ vertices. When $n=3$, 
$G$ is a star tree with two leaves. We have discussed
this model above in \Cref{eg:starTreeOnTwoLeaves}. The next case is the path with 
four vertices, and we were able to compute the polar
degrees of $X_{G,(2,2,2,2)} \subset \mathbb{P}^{15}$. In this case, the 
toric variety is defined by the parametrization
$$ p_{ijk\ell} = a_{ij} b_{jk}c_{k\ell} \mbox{ for } 1 \leq i,j,k,\ell \leq 2.$$
This variety has a dimension of $7$ and a degree of $34$. 
The polar degrees are given by 
$$ 8s^{14}t^2 + 56s^{13}t^3 + 152s^{12}t^4 + 344s^{11}t^5 + 280 s^{10}t^6 + 136 s^9t^7 + 34 s^8t^8.$$



\subsubsection{Polar degrees of the binary $4$-cycle}
In all the examples we have looked at so far, we dealt
with toric varieties associated with graphical models
that are {\it decomposable} \cite{sullivant2018algstat} \cite{lauritzen}.
Our next example is the smallest graphical model that is not decomposable. The underlying graph $G$ 
is a cycle with four vertices. We take all four
random variables to be binary. The toric
variety $X_{G, (2,2,2,2)}$ has the parametrization
\[p_{ijkl}=a_{ij}b_{jk}c_{k\ell}d_{i\ell} \mbox{ for }  1\leq i,j,k, \ell \leq 2.\]
This toric variety has
dimension $8$ and degree $64$. The multidegree 
of its conormal variety is
$$ 48s^{15}t + 192s^{14}t^2 +576s^{13}t^3 + 1056s^{12}t^4+ 1440s^{11}t^5 +1344s^{10}t^6 + 864s^9t^7 +328 s^8t^8+  64s^7t^9.$$

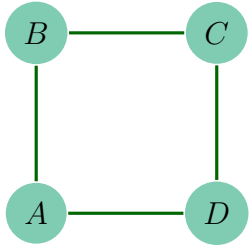
\begin{figure}
         \centering
         \begin{tikzpicture}
  [scale=.3,auto=left,very thick,-]
\node [circle, fill=green!60!blue!50!white] (nA) at (-4,0) {$A$};
\node [circle, fill=green!60!blue!50!white] (nC) at (4,8)  {$C$};
\node [circle, fill=green!60!blue!50!white] (nD) at (4,0)  {$D$};
\node [circle, fill=green!60!blue!50!white](nB) at (-4,8)  {$B$};
\draw (nB) -- (nA) [green!40!black]  node [midway, left, black  ] {};
\draw (nB) -- (nC) [green!40!black] node [midway, right, black ] {};
\draw (nC) -- (nD) [green!40!black] node [midway, right, black ] {};
\draw (nA) -- (nD) [green!40!black]  node [midway, left, black  ] {};
\end{tikzpicture}
    \caption{The smallest nondecomposable graph (the $4-cycle$ graph).}
    \label{fig:4-cycle-graph}
\end{figure}

\subsubsection{Polar degrees of the no-three-way interaction models}
The no-three-way interaction model is a toric 
model based on the graph $G$, which is a triangle, though it is not a graphical model. It belongs to the larger class of hierarchical log-linear models; see \cite{sullivant2018algstat}.
If the three random variables have state spaces $[r]$, $[s]$, and $[t]$, the model is parametrized by
\[
p_{ijk} =a_{ij}b_{jk}c_{ik} \mbox{ where } i\in [r],\, j \in [s], \, k\in [t].
\]
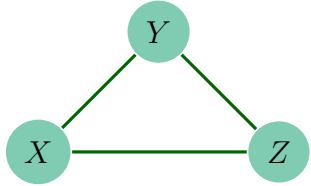
\begin{figure}[ht!]
    \centering
    \begin{tikzpicture}
    [scale=.4,auto=left,very thick,-]
\node [circle, fill=green!60!blue!50!white] (nA) at (-4,0) {$X$};
\node [circle, fill=green!60!blue!50!white] (nC) at (4,0)  {$Z$};
\node [circle, fill=green!60!blue!50!white](nB) at (0,4)  {$Y$};
\draw (nA) -- (nB) [green!40!black]  node [midway, left, black  ] {};
\draw (nB) -- (nC) [green!40!black] node [midway, right, black ] {};
\draw (nA) -- (nC) [green!40!black] node [midway, right, black ] {};
\end{tikzpicture}
    \caption{Graph of the no-three-way interaction model.}
    \label{fig:No3WayGraph}
\end{figure}

We were able to compute the polar degrees when
 $r=s=t=2$ and $r=s=2, \, t=3$. 
 The first variety is a hypersurface in $\mathbb{P}^7$ of degree $4$. Its polar degrees can be read from 
 $$ 4s^6t^2 + 12s^5t^3 + 36s^4t^4 + 36s^3t^5 + 36s^2t^6 + 12st^7 + 4t^8.$$
 The second model is $9$-dimensional 
 in $\mathbb{P}^{11}$ and its degree is $12$.
 The respective 
 multidegree of its conormal variety is  
 $$12s^{10}t^2 + 56s^9t^3 + 180s^8t^4 + 288s^7t^5 + 376s^6t^6 + 288s^5t^7 + 180s^4t^8 + 56s^3t^9 + 12s^2t^{10}.$$
 
 Based on these computations, we venture to make the following conjecture. 
\begin{conjecture}
  Let $X_{(2,2,k)}^3$ be the no-three-way interaction model of two binary random variables and a third one with state space $k$. Then, the sequence of its polar
  degrees is palindromic. 
\end{conjecture}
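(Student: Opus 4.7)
The plan is to exploit the combinatorial symmetry of the no-three-way interaction polytope. Let $X := X^3_{(2,2,k)} \subset \mathbb{P}^{4k-1}$ and let $A$ be its defining matrix, whose $4k$ columns are indexed by $(i,j,\ell) \in [2]^2 \times [k]$ and whose rows come from the three pairwise interactions. The polytope $P_A = \conv(A)$ admits the involution $\iota: (i,j,\ell) \mapsto (3-i, 3-j, \ell)$ coming from swapping the states of the two binary variables, and this is a lattice symmetry of $P_A$.

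First I would attempt to lift $\iota$ to a projective-linear self-duality of $X$: an automorphism $\sigma \in \mathrm{PGL}(4k, \mathbb{C})$ sending $X$ to $X^\vee$ under the canonical identification $\mathbb{P}^{4k-1} \cong (\mathbb{P}^{4k-1})^\vee$. Such a $\sigma$ would force the multidegree polynomial of the conormal variety $N_X$ to be symmetric in its two variables, whence $\delta_j(X) = \delta_{n+1-j}(X)$ and the polar-degree sequence is palindromic. The natural candidate $\sigma$ is the linear map induced by the coordinate permutation from $\iota$, possibly composed with a character coming from the Horn-Kapranov parametrization of $X^\vee$; its existence would need to be verified by comparing tropicalizations or Gale duals of $A$ and $-A$.

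If direct self-duality fails, the backup plan is combinatorial: invoke a Matsui-Takeuchi-style formula expressing the polar degrees $\mu_i(X_A)$ as alternating weighted sums of normalized volumes of faces of $P_A$, and use $\iota$ (possibly combined with an axis-reversal of the $\ell$-coordinate, so as to obtain a Gorenstein-type involution of $P_A$) to pair faces of complementary dimension with matching normalized volumes, thus matching the $\mu_i$ and $\mu_{\dim X - i}$ contributions term-by-term.

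The hard part will be bridging the symmetry at the level of $P_A$ to a symmetry at the level of polar degrees, since $\iota$ preserves face dimensions whereas palindromy of polar degrees requires a pairing of faces of \emph{complementary} dimension. One pathway is to establish a Gorenstein property of $P_A$ (or of a closely related polytope) so that its $h^*$-polynomial is palindromic and this palindromy transfers to the polar degrees. As a sanity check, the $k=2$ and $k=3$ data reported in the paper are palindromic, and computing $k=4, 5$ in \texttt{Macaulay2} would either confirm the conjectured face pairing or expose a closed-form recursion for the polar-degree polynomial in $k$.
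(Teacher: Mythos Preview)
The paper does not prove this statement: it is explicitly posed as a \emph{conjecture}, supported only by the computations for $k=2$ and $k=3$ reported in the preceding paragraph. There is therefore no proof in the paper to compare against.

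What you have written is a research outline, not a proof, and you say as much yourself. The one solid ingredient is correct: if $X$ were projectively self-dual, then biduality of the conormal variety would swap the roles of $s$ and $t$ in the multidegree of $N_X$, forcing $\delta_j(X)=\delta_{n+1-j}(X)$. The difficulty, which you already flag, is that the lattice involution $\iota(i,j,\ell)=(3-i,3-j,\ell)$ is an automorphism of $P_A$ and hence of $X$ itself; it does not, on its own, carry $X$ to $X^\vee$. Your suggested patches---composing with a Horn--Kapranov character, passing to a Gale dual, or invoking a Gorenstein property---are plausible directions but none is carried out, and each would require substantial new argument. In particular, the Gorenstein/$h^*$-palindromy route does not transparently transfer to polar degrees without an additional link (e.g.\ through Chern--Mather classes or an explicit Matsui--Takeuchi computation), and your ``complementary-dimension face pairing'' is exactly the missing structural ingredient: $\iota$ pairs faces of equal, not complementary, dimension.

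In short: the conjecture is open in the paper, and your proposal correctly isolates self-duality of $X^3_{(2,2,k)}$ (or an equivalent symmetry of its conormal variety) as the target, but does not establish it.
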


\section{Computing Wasserstein Degrees} \label{section3}

In this section, our objective is to solve 
\begin{equation*} 
\mbox{minimize} \,\, \ell_F(\nu) \,\,\,
\mbox{subject to} \,\,  \nu \in (\mu + \mathcal{L}_F) \cap \M
\end{equation*}
algebraically. Here $\M = X \cap \Delta_{n-1}$
is a toric model, and $F$ is a face of the 
Wasserstein ball $B$ for a Wasserstein distance 
based on a  metric on $[n]$. The 
objective function $\ell_F$ is a linear functional that attains its maximum on $F$ over $B$. A natural choice is given by a normal vector $n$ to the face $F$ and defining $\ell_F(\nu):= n^\top \nu$. Finally, $\mu + \mathcal{L}_F$ is the affine linear subspace that is a translation of $\mathcal{L}_F$ by 
a generic point $\mu \in \Delta_{n-1}$, where $\mathcal{L}_F$ is the subspace spanned by the vertices of $F$. This is a constrained optimization problem with two types of constraints: linear constraints defining the affine space $\mu + \mathcal{L}_F$ and polynomials defining $\M$; here, we are ignoring the non-negativity constraints on the coordinate variables. Since the feasible 
set is non-convex, we solve this problem by 
computing the complex critical points of $\ell_F$ on $(\mu + \mathcal{L}_F) \cap \M$. Recall that we defined the Wasserstein degree $w(X,F)$ of $X$ with respect to $F$  as the number of these complex critical points for  generic 
$\mu \in \Delta_{n-1}$.

One way of computing these complex critical points is by introducing Lagrange multipliers. This adds extra variables, as many
as there are constraints. Instead, we use an equivalent formulation without the extra Lagrange multipliers, but at the expense of introducing more equations. We now explain this.

We assume that $(\mu + \mathcal{L}_F) \cap \M$ is defined by the equations
$$ g_1(p) \, = \, g_2(p) \, = \, \cdots \, = g_m(p) \, = \, 0,$$
where $g_i \in \mathbb{Q}[p_1, \ldots, p_n]$. 
We note that one of the polynomials is $p_1 + p_2 + \cdots + p_n - 1$. By definition,
$\nu^*$ is a critical point of $\ell_F$ on 
$(\mu + \mathcal{L}_F)  \cap \M$ if 
\begin{itemize}
\item[a)] $(\nabla_p \ell_F) (\nu^*) \in \mathrm{span}_{\C} \left\{ (\nabla_p g_1)(\nu^*), \, (\nabla_p g_2)(\nu^*), \ldots, (\nabla_p g_m)(\nu^*) \right\}$, and 
\item[b)] $g_1(\nu^*) = g_2(\nu^*) = \cdots =  g_m(\nu^*) = 0.$
\end{itemize}
Let $I = \langle g_1, \ldots, g_m\rangle$ be the 
ideal generated by the defining equations and let\\ $c = \codim(I) + 1$. Then, the first condition 
above holds if and only if  the 
augmented Jacobian matrix evaluated at $\nu^*$,
$$ \overline{\mathrm{Jac}}(\nu^*) \, := \, 
\begin{bmatrix}
   (\nabla_p \ell_F) (\nu^*) \\
    (\nabla_p g_1)(\nu^*) \\
    (\nabla_p g_2)(\nu^*) \\
    \vdots \\
    (\nabla_p g_m)(\nu^*) 
\end{bmatrix}$$

has rank at most $ c-1$. This is equivalent to all $c$-minors of $\overline{\mathrm{Jac}}(\nu^*)$ vanishing. In other words, the critical points
are the complex solutions to the system defined
by $g_1 = \ldots = g_m =0$ together with setting 
the $c$-minors of $\overline{\mathrm{Jac}}(p)$ 
equal to zero. We now present this formulation as an algorithm. This algorithm is employed in our computational experiment, as presented in the next subsection.
\begin{algorithm} 
\caption{Wasserstein Degree without Lagrange Multipliers}\label{alg:cap}
\begin{algorithmic}
\State {\bf Input}: Equations of $\M = X  \cap 
\Delta_{n-1}$, a face $F$ of the Wasserstein ball, and a generic point $\mu \in \Delta_{n-1}$.
\State {\bf Output}: Wasserstein degree $w(X,F)$ of $X$ with respect to $F$. \\ 
\State Identify a normal to $F$ that gives $\ell_F$.
\State Let $I=\langle g_1, \ldots, g_m\rangle$ be $I(\M)$ together with the linear equations defining $\mu + \mathcal{L}_F$. 
\State Let $c = n - \dim(I) + 1$.
\State Let $J = I +  \left\langle \text{minors}_{c}\left(\overline{Jac}(p)\right) \right\rangle$. \\
\Return $\mathrm{deg}(J)$.
\end{algorithmic}
\end{algorithm}

\begin{proposition} \label{prop:Wassserstein-degree} The Wasserstein degree $w(X,F)$ does not depend on the choice of $\mu \in \Delta_{n-1}$. 
\end{proposition}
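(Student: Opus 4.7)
The plan is to treat $\mu$ as a symbolic parameter and argue by specialization. The equations of Algorithm~\ref{alg:cap} depend on $\mu$ only through the linear constraints cutting out $\mu + \mathcal{L}_F$, so the generators of the ideal $J$ have coefficients that are polynomials (in fact, linear) in $\mu_1,\dots,\mu_n$. Extending scalars to $K := \mathbb{Q}(\mu_1,\dots,\mu_n)$, let $\tilde{J}$ denote the resulting ideal in $K[p_1,\dots,p_n]$. The goal is to show that $\tilde{J}$ is zero-dimensional of some degree $N$, and that this generic value $N$ equals the number of complex critical points for Zariski-generically chosen $\mu \in \Delta_{n-1}$.

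First I would set up the incidence variety
\[ Z \,:=\, \overline{\{(\mu,\nu)\in \mathbb{C}^{n}\times \mathrm{Reg}(X)\,:\, \nu \in \mu+\mathcal{L}_F \text{ and } \nu \text{ is a critical point of } \ell_F \text{ on } (\mu+\mathcal{L}_F)\cap X\}} \]
and consider the projection $\pi: Z \to \mathbb{C}^{n}$, $(\mu,\nu)\mapsto \mu$. The key geometric observation is that for generic $\mu$, the affine subspace $\mu+\mathcal{L}_F$ meets $X$ transversally in its smooth locus, so $(\mu+\mathcal{L}_F)\cap X$ is a smooth variety of the expected dimension, and the restriction of the generic linear functional $\ell_F$ to it has only finitely many critical points (this is precisely the situation in which the Wasserstein degree is defined). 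Consequently, the generic fiber of $\pi$ is zero-dimensional, so $\pi$ is a generically finite, dominant morphism between irreducible varieties (or equidimensional components) of the same dimension $n$.

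Then the conclusion follows from the standard principle of conservation of number: a generically finite dominant morphism between irreducible varieties of the same dimension has fibers of constant cardinality (counted with multiplicity) above a Zariski-dense open subset of the base. Equivalently, the family of zero-dimensional ideals $\{J_\mu\}$ is generically flat, and the degree $\deg(J_\mu)$ is upper semicontinuous in $\mu$ and attains its generic value $N = \deg_K(\tilde{J})$ on a dense open subset of $\mathbb{C}^n$, which meets $\Delta_{n-1}$ in a dense open set since $\Delta_{n-1}$ has full real dimension. Hence $w(X,F)$, computed at any generic $\mu\in \Delta_{n-1}$, equals this intrinsic integer $N$.

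The main obstacle is verifying that the incidence variety $Z$ is equidimensional of dimension $n$ so that $\pi$ is indeed generically finite. This reduces to showing that the augmented-Jacobian rank condition, together with the linear equations, cuts out a scheme of the expected dimension for generic $\mu$; the Bertini-type transversality of $\mu+\mathcal{L}_F$ with $X$ for generic $\mu$ together with the finiteness of critical points of a generic linear functional on a smooth irreducible variety (a Morse-theoretic fact, also encoded in the polar degree bound of Theorem~\ref{thm: polar-vs-Wasserstein}) supply exactly this input. With that in hand, the rest of the proof is purely formal, following from semicontinuity of the Hilbert function of a flat family, and in particular requires no further specific computation with $\mu$.
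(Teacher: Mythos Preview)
Your argument is correct, but it takes a substantially different route from the paper's. You build an incidence variety over the parameter space of points $\mu$, invoke generic flatness and semicontinuity, and appeal to Bertini-type transversality and the polar-degree bound to control the fibre dimension. This is the general-purpose way to show that a parametrized count is generically constant, and it would work equally well if the dependence on $\mu$ were more complicated.

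The paper's proof is much more direct and exploits a feature of this particular problem that your approach does not highlight: changing $\mu$ affects only the constant terms of the affine linear equations cutting out $\mu+\mathcal{L}_F$, and those constant terms vanish upon differentiation. Hence the augmented Jacobian $\overline{\mathrm{Jac}}(p)$, and therefore the ideal of its $c$-minors, is literally independent of $\mu$. The variety defined by the equations of $X$ together with these minors is thus a fixed variety $W$ of dimension equal to $\dim\mathcal{L}_F$, and the critical points for a given $\mu$ are simply $W\cap(\mu+\mathcal{L}_F)$. For generic $\mu$ this intersection has cardinality $\deg(W)$, which is manifestly $\mu$-independent. So where you use a family argument and semicontinuity, the paper reduces everything to intersecting a single fixed variety with a generic affine translate of a fixed linear space---no incidence variety, no flatness, and no need to separately verify equidimensionality of $Z$.
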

\begin{proof}
The equations defining 
$\mu + \mathcal{L}_F$ are affine linear equations that differ only in their constant terms for different $\mu$. Therefore, $\overline{Jac}(p)$ and the ideal of the $c$-minors of this Jacobian do not 
depend on $\mu$. The codimension of the ideal generated by these minors, together with the equations defining $X$, is equal to the dimension of $\mathcal{L}_F$. The number of critical points is equal to the intersection of the variety defined by the above ideal with $\mu + \mathcal{L}_F$. For generic $\mu$, this number is equal to the degree of this variety, and hence $w(X,F)$ does not depend on the choice of $\mu$. 
\end{proof}
\begin{example}\label{ex3}
We go back to Example \ref{ex:twisted} where $X$ is the twisted cubic in $\Delta_3$ defined by the equations
 \begin{align*}
    g_1 &= 3p_0p_2 - p_1^2 = 0\\
    g_2 &= 3p_1p_3-p_2^2 = 0\\
    g_3 &= 9p_0p_3-p_1p_2 = 0\\
    g_4 &= p_0+p_1+p_2+p_3 - 1 =0.
\end{align*} 
We use the discrete metric on $[4]=\{1,2,3,4\}$ and take
 $ \mu = \left(\frac{1}{6},\frac{1}{2},\frac{1}{6},\frac{1}{6}\right)$. Consider the edge $F$ of the Wasserstein ball determined by the vertices $(0,0,1,-1)$ and $(1,0,0,-1)$. Then $ \mu + \mathcal{L}_F$ is defined by the equations 
 $g_4 = 0$, together with $g_5 = p_0+p_2+p_3-\frac{1}{2} = 0$. We can take the linear functional $\ell_F= p_0+p_2-2p_3$. Therefore, the optimization problem we want to solve is to minimize $\ell_F$ over $X\cap \mathcal{L}_F$, and the corresponding Wasserstein degree to be computed is $w(X,F)$. To solve the problem by the Lagrange multiplier method, one can start by finding a Gr\"obner basis for $X\cap (\mu+\mathcal{L}_F)$ using \texttt{Macaulay2} \cite{M2} as follows:
\begin{lstlisting}
i1 : R = QQ[p_0, p_1, p_2, p_3, MonomialOrder => Lex];
i2 : l = p_0 + p_2 - 2 * p_3;
i3 : g1 = 3*p_0*p_2 - p_1^2;
i4 : g2 = 3*p_1*p_3 - p_2^2;
i5 : g3 = 9*p_0*p_3 - p_1*p_2;
i6 : g4 = p_0 + p_1 + p_2 + p_3 - 1;
i7 : g5 = p_0 + p_2 + p_3 - 1/2;
i8 : I = ideal{g1,g2,g3,g4,g5};
i9 : gens gb I
o9 = | 216p_3^3-540p_3^2+18p_3-1 20p_2+36p_3^2-72p_3-3 2p_1-1
     ---------------------------------------------------------
     20p_0-36p_3^2+92p_3-7 |
             1       4
o9 : Matrix R  <--- R
\end{lstlisting}

The Gr\"obner basis comes out to be a set containing: 
\begin{align*}
f_1 = 216p_3^3-540p_3^2+18p_3-&1,\\ 
f_2 = 20p_2+36p_3^2-72p_3-&3,\\
f_3 = 2p_1-&1,\\
f_4 =  20p_0-36p_3^2+92p_3-&7.
\end{align*}
We see from here that, 
$\M \cap (\mu+\mathcal{L}_F)$, is zero-dimensional. The intersection consists of three points. Since this variety
consists of finitely many points, they will be 
all critical points. So we conclude that $w(X,F)=3$.
\end{example}

\subsection{Computational experiments}
There are two main sources of bottlenecks for computing the Wasserstein degree without Lagrange multipliers: 
\begin{enumerate}
    \item The computation of the hyperplane representation of the Wasserstein ball $B$ from its vertex representation 
    and the enumeration of its faces, and
    \item the large number of $c$-minors
    of $\overline{\mathrm{Jac}}(p)$ in \Cref{alg:cap}.
\end{enumerate}
The complexity of the first problem comes from the fact that even for models
in $\Delta_{3}$, such as the binary path model on three vertices, the number 
of 
faces of $B$ is large. 


The
augmented Jacobian matrix $\overline{\mathrm{Jac}}(p)$ is 
a matrix where almost all
the rows are given by the polynomials $g_1, \ldots, g_m$ defining $\M$. These
can grow quickly, and the number
of $c$-minors where $c = \codim(\M) + 1$
is on the order of $\binom{n}{c} \binom{m}{c}$. The computations need to 
be repeated for each face of $B$. 
As a result, the models we could consider were limited.

For implementation, we used SageMath \cite{sagemath} for both its rich library of symbolic manipulation tools and its numerical computational power. 

In the following tables, we report 
the results of our computations for the
binary path model on $3$ vertices, the binary no-three-way interaction model, and various Hirzebruch surfaces. 
For the first two, the distance metric 
is the Hamming distance on $[2] \times [2] \times [2]$. That is, 
\[d=
    \begin{pNiceMatrix}[first-row,first-col]
        & 000 & 001 & 010 & 011  & 100 & 101 & 110 & 111\\
    000 &  0  &  1  &  1  & 2    &  1  &  2  &  2  & 3  \\
    001 &  1  &  0  &  2  & 1    &  2  &  1  &  3  & 2  \\
    010 &  1  &  2  &  0  & 1    &  2  &  3  &  1  & 2  \\
    011 &  2  &  1  &  1  & 0    &  3  &  2  &  2  & 1  \\
    100 &  1  &  2  &  2  & 3    &  0  &  1  &  1  & 2  \\
    101 &  2  &  1  &  3  & 2    &  1  &  0  &  2  & 1  \\
    110 &  2  &  3  &  1  & 2    &  1  &  2  &  0  & 1  \\
    111 &  3  &  2  &  2  & 1    &  2  &  1  &  1  & 0  \\
\end{pNiceMatrix}\]
is the metric used for the computations reported in \Cref{tab:AlgebraicPathModel222} and \Cref{tab:AlgebraicNoThreeWay}. For the last, it 
is the $L_1$-metric on $[n]$ where
$n= a+b+2$ for the Hirzebruch surface $S(a,b)$.
We report the frequency of the Wasserstein degrees observed for each model and 
each $i$-dimensional face of the corresponding Wasserstein ball $B$. We use the notation $k \, : \, s$ where $s$ is the number of faces of the appropriate dimension or codimension with Wasserstein distance degree $w(X,F) = k$. An entry of the form $- \, : \, s$ indicates that the ideal $J$ in Algorithm 1 is not zero dimensional. Each table contains the $f$-vector $(f_0, f_1, \ldots)$ of $B$ where $f_i$ is the number of $i$-dimensional faces.

The Wasserstein degrees are rarely equal to the polar degrees. This results from the fact that the linear spaces we get from the faces of the Wasserstein unit ball are not required to meet any genericity conditions, unlike the linear spaces used in the definition of polar degrees. 

We would like to point out that, ultimately, the real critical points are most relevant to solve the Wasserstein distance problem. The number of the real critical points will stay constant for points $\mu$ in certain regions of the probability simplex $\Delta_{n-1}$. These regions should be the connected regions of the complement of a discriminant hypersurface, reminiscent of the Euclidean distance degree discriminants \cite{draisma2016euclidean}. We leave this to a future project. 

\section*{Acknowledgements}
Part of this research was performed while the authors were visiting the Institute for Mathematical and Statistical Innovation (IMSI), which is supported by the National Science Foundation (Grant No. DMS-1929348).

We thank Bernd Sturmfels and Jose Israel Rodriguez for their helpful conversations. We also appreciate the valuable comments of the anonymous referee.

We thank the University of Hawaii Information Technology Services -- Cyberinfrastructure for providing the advanced computing resources and technical assistance utilized in computing polar degrees of some varieties that appeared in this work. These resources were partly made possible by National Science Foundation CC* awards \#2201428 and \#232862. IN was supported by the National Science Foundation grant DMS-1945584.

We also thank the support given by the UC Davis TETRAPODS Institute of Data Science. This research was particularly supported by NSF HDR:TRIPODS grant CCF-1934568. 

\begin{table}[h!]
\tiny
\begin{tabular}{l|l}
\hline 
$f$-vector:   & (24,204,812,1674,1836,1008,216) \\ \hline 
dimension 1 & $\begin{cases}
 0 : 12 \\
 1 : 150 \\
 2 : 42 
 \end{cases}$   \\ \hline 
dimension 2 & $\begin{cases}
 0 : 162 \\
 1 : 16 \\
 2 : 416 \\
 4 : 98 \\
 6: 52 \\
 7: 16 \\
 8 : 8 \\
 - : 44
 \end{cases}$  \\ \hline 
dimension 3 & $\begin{cases}
 0 : 100 \\
 1 : 254 \\
 2 : 68 \\
 - : 1252
 \end{cases}$  \\ \hline 
dimension 4 & $\begin{cases}
 0 : 94 \\
 2 : 146 \\
 - : 1596
 \end{cases}$ \\
\hline 
\end{tabular}
\caption{Wasserstein degrees of the binary path model on three vertices under Hamming distance.}
\label{tab:AlgebraicPathModel222}
\end{table}

\begin{table}[h!]
\centering
\tiny
\begin{tabular}{l|l}
\hline 
$f$-vector:   & (24,192,652,1062,848,306,38) \\ \hline 
dimension 0 & $\begin{cases}
 1 : 24
 \end{cases}$ \\ \hline 
dimension 1 & $\begin{cases}
 2: 192
 \end{cases}$   \\ \hline 
dimension 2 & $\begin{cases}
 2: 84 \\
 6 : 568
 \end{cases}$  \\ \hline 
dimension 3 & $\begin{cases}
 9 : 48 \\
 10 : 96 \\
 12 : 288 \\
 14: 48 \\
 16: 30 \\
 19: 288 \\
 -: 264
 \end{cases}$ \\ \hline  
\end{tabular}
\caption{Wasserstein degrees of binary no-three-way interaction model (2,2,2) under Hamming distance.}
\label{tab:AlgebraicNoThreeWay}
\end{table}
\newpage 

\begin{table}[h!]
\centering
\tiny
\begin{tabular}{p{0.05\linewidth} |
p{0.22\linewidth} |p{0.17\linewidth} |p{0.17\linewidth} |p{0.17\linewidth}}
\hline
$a,b$ &  $f$-vector & Codimension 3 
 & Codimension 2 & Codimension 1  \\ \hline
 1,2 & (8, 24, 32, 16) & $\begin{cases}
 1 : 8 \\
 2 : 8 \\
 3 : 8
 \end{cases} $
 & $\begin{cases}
 1 : 4 \\
 2 : 4 \\
 3 : 12 \\
 4 : 12
 \end{cases}$ & 
 $\begin{cases}
     3 : 16
 \end{cases}$ \\ \hline
  1,3 & (10, 40, 80, 80, 32) & $\begin{cases}
 1 : 16 \\
 2 : 24 \\
 3 : 24 \\
 4: 16
 \end{cases} $
 & $\begin{cases}
 0 : 2 \\
 2 : 2 \\
 3 : 16 \\
 4 : 46 \\
 6 : 14
 \end{cases}$ & 
 $\begin{cases}
     3 : 32
 \end{cases}$ \\ \hline
   1,4 & (12, 60, 160, 240, 192, 64) & $\begin{cases}
 1 : 32 \\
 2 : 48 \\
 3 : 32 \\
 4: 64 \\
 5: 64
 \end{cases} $
 & $\begin{cases}
 5 : 80 \\
 6 : 16 \\
 7 : 48 \\
 8 : 48 \\
 \end{cases}$ & 
 $\begin{cases}
     5 : 64
 \end{cases}$ \\ \hline
    1,5 & (14, 84, 280, 560, 672, 448, 128) & $\begin{cases}
 1 : 64 \\
 2 : 96 \\
 3 : 96 \\
 4: 224 \\
 5: 128 \\
 6: 64
 \end{cases}$ 
 & $\begin{cases}
 3 : 12 \\
 5 : 64 \\
 6 : 64 \\
 7 : 52 \\
 8 : 192 \\
 10 : 64 \\
 \end{cases}$ & 
 $\begin{cases}
     5 : 128
 \end{cases}$ \\ \hline
     2,3 & (12, 60, 160, 240, 192, 64) & $\begin{cases}
 1 : 32 \\
 2 : 80 \\
 3 : 32 \\
 4: 32 \\
 5: 64 
 \end{cases} $
 & $\begin{cases}
 4 : 16 \\
 5 : 16 \\
 6 : 48 \\
 7 : 80 \\
 8 : 32 \\
 \end{cases}$ & 
 $\begin{cases}
     5 : 64
 \end{cases}$ \\ \hline
      2,4 & (14, 84, 280, 560, 672, 448, 128) & $\begin{cases}
 1 : 64 \\
 2 : 160 \\
 3 : 64 \\
 4: 128 \\
 5: 64 \\
 6: 192
 \end{cases} $
 & $\begin{cases}
 2 : 4 \\
 4 : 4 \\
 6 : 44 \\
 7 : 128 \\
 8 : 96 \\
 9: 64
 \end{cases}$ & 
 $\begin{cases}
     6 : 128
 \end{cases}$ \\ \hline
\end{tabular}
\caption{Wasserstein degrees of various Hirzebruch surfaces under $L_1$ metric.}
\label{tab:AlgebraicHirzebruch}
\end{table}
{ \text{ }}
\newpage

\bibliographystyle{plain}
\bibliography{refs.bib}
\end{document}